\g@addto@macro\bfseries{\boldmath}
\newtheoremstyle{mythm}
{\topsep}   
{\topsep}   
{\itshape}  
{0pt}       
{\bfseries} 
{.}         
{5pt plus 1pt minus 1pt} 
{}          
\newtheoremstyle{mydefi}
{\topsep}   
{3ex}   
{\normalfont}  
{0pt}       
{\bfseries} 
{.}         
{5pt plus 1pt minus 1pt} 
{}          
\newenvironment{myproof}[1][\proofname]{\par
	\pushQED{\qed}%
	\normalfont \topsep6\p@\@plus6\p@\relax
	\trivlist
	\item[\hskip\labelsep
	\itshape
	#1\@addpunct{.}]\ignorespaces
}{%
	\popQED\endtrivlist\@endpefalse\bigskip
}
\theoremstyle{mythm}
\newtheorem{satz}{Satz}[section]
\newtheorem{thm}[satz]{Theorem}
\newtheorem{prop}[satz]{Proposition}
\newtheorem{lem}[satz]{Lemma}
\newtheorem{cor}[satz]{Corollary}
\newtheorem{rem}[satz]{Remark}
\theoremstyle{mydefi}
\newtheorem{defi}[satz]{Definition}
\newtheorem{ex}[satz]{Example}
\DeclareMathOperator{\Aut}{Aut}
\DeclareMathOperator{\SL}{SL}
\DeclareMathOperator{\GL}{GL}
\DeclareMathOperator{\PSL}{PSL}
\DeclareMathOperator{\PGL}{PGL}
\DeclareMathOperator{\PGammaL}{P\Gamma L}
\DeclareMathOperator{\ord}{ord}
\DeclareMathOperator{\N}{N}
\newcommand{\inv}[1]{#1^{-1}}
\newcommand{\mat}[4]{\left( \begin{smallmatrix}#1&#2\\#3&#4\end{smallmatrix} \right)}
\newcommand{\zv}[2]{\mbox{\small\ensuremath{(#1,#2)}}}
\newcommand{\sv}[2]{\left(\begin{smallmatrix}#1\\#2\end{smallmatrix}\right)}
\title{Parallelisms and Translations of (Affine) \texorpdfstring{$\SL(2,q)$-Unitals}{SL(2,q)-Unitals}}
\author{Verena Möhler}
\date{December 17, 2020}
\begin{document}

\maketitle

\begin{abstract}
	\noindent Unitals can be obtained as closures of affine unitals via parallelisms. The isomorphism type of the closure depends on the chosen parallelism, which need not be unique. For affine $\SL(2,q)$-unitals, we introduce a class of parallelisms for odd order and one for square order.	
	
	\noindent Translations are automorphisms of unitals, fixing each block through a given center.	For each of the known parallelisms of affine $\SL(2,q)$-unitals, we compute all possible translations with centers on the block at infinity.\bigskip
	
	\noindent \textbf{2020 MSC:}
	51A15, 
	51A10, 
	05E18 
	\smallskip
	
	\noindent \textbf{Keywords:} design, unital, affine unital, parallelism, automorphism, translation
\end{abstract}

Most of the results in the present paper have been obtained in the author's Ph.\,D.\ thesis \cite{diss}, where detailed arguments can be found for some statements that
we leave to the reader here.

\section{Preliminaries}

Projective planes arise from affine planes via a unique parallelism. The strategy of building the affine part of a geometry first and then completing it by using a parallelism on the blocks can successfully be applied to other incidence structures than affine and projective planes. We apply this approach to a special construction of unitals. A~crucial difference to affine planes is, that for a given affine unital, there may be various parallelisms, leading to non-isomorphic closures.\bigskip

A \textbf{unital of order $n$} is a $2$-$(n^3+1,n+1,1)$ design, i.\,e.\ an incidence structure with $n^3+1$ points, $n+1$ points on each block and unique joining blocks for any two points. We also consider \emph{affine unitals}, which arise from unitals by removing one block (and all the points on it) and can be completed to unitals via a parallelism on the short blocks. We give an axiomatic description:

\goodbreak
\begin{defi}
	Let $n\in\mathbb N$, $n\geq2$. An incidence structure $\mathbb U=(\mathcal P,\mathcal B,\emph I)$ is called an \textbf{affine unital of order \boldmath $n$} if:
	\begin{itemize}
		\item[(AU1)] There are $n^3-n$ points.
		\item[(AU2)] Each block is incident with either $n$ or $n+1$ points. The blocks incident with $n$~points will be called \textbf{short blocks} and the blocks incident with $n+1$ points will be called \textbf{long blocks}.
		\item[(AU3)] Each point is incident with $n^2$ blocks.
		\item[(AU4)] For any two points there is exactly one block incident with both of them.
		\item[(AU5)] There exists a \textbf{parallelism} on the short blocks, meaning a partition of the set of all short blocks into $n+1$ parallel classes of size $n^2-1$ such that the blocks of each parallel class are pairwise non-intersecting.
	\end{itemize}	
\end{defi}

The existence of a parallelism as in (AU5) must explicitly be required (see \cite[Example 3.10]{diss}). An affine unital $\mathbb U$ of order $n$ with parallelism $\pi$ can be completed to a unital $\mathbb U^\pi$ of order $n$ as follows: For each parallel class, add a new point that is incident with each short block of that class. Then add a single new block $[\infty]^\pi$, incident with the $n+1$ new points (see \cite[Proposition 3.9]{diss}). We call $\mathbb U^\pi$ the \textbf{$\pi$-closure} of $\mathbb U$.\bigskip

Note that though we must require the existence of a parallelism in the definition of an affine unital, this parallelism need not be unique. It is therefore not convenient to require that isomorphisms of affine unitals respect certain parallelisms and we will only ask them to be isomorphisms of incidence structures. We call two parallelisms $\pi$ and $\pi'$ of an affine unital $\mathbb U$ \textbf{equivalent} if there is an automorphism of $\mathbb U$ which maps $\pi$ to $\pi'$.\bigskip

Given an affine unital $\mathbb U$ with parallelisms $\pi$ and $\pi'$, the closures $\mathbb U^\pi$ and $\mathbb U^{\pi'}$ are isomorphic with $[\infty]^\pi \mapsto [\infty]^{\pi'}$ exactly if $\pi$ and $\pi'$ are equivalent (see \cite[Proposition 3.12]{diss}).\bigskip

\section{Parallelisms of Affine \texorpdfstring{$\SL(2,q)$-Unitals}{SL(2,q)-Unitals}}

From now on let $p$ be a prime and $q\coloneqq p^e$ a $p$-power. We are interested in a special kind of affine unitals, namely affine $\SL(2,q)$-unitals. The construction of those affine unitals is due to Grundhöfer, Stroppel and Van Maldeghem \cite{slu}. They consider translations of unitals, i.\,e.\ automorphisms fixing each block through a given point (the so-called center). Of special interest are unitals of order $q$ where two points are centers of translation groups of order $q$. In the classical (Hermitian) unital of order $q$, any two such translation groups generate a group isomorphic to $\SL(2,q)$; see \cite[Main Theorem]{moufang} for further possibilities. The construction of (affine) $\SL(2,q)$-unitals is motivated by this action of $\SL(2,q)$ on the classical unital.\bigskip

Let $S\leq \SL(2,q)$ be a subgroup of order $q+1$ and let $T\leq \SL(2,q)$ be a Sylow $p$-subgroup. Recall that $T$ has order $q$ (and thus trivial intersection with $S$), that any two conjugates $T^h\coloneqq\inv hTh$, $h\in \SL(2,q)$, have trivial intersection unless they coincide and that there are $q+1$ conjugates of $T$.

Consider a collection $\mathcal D$ of subsets of $\SL(2,q)$ such that each set $D\in \mathcal D$ contains $\mathds{1}\coloneqq \mat 1001$, that $\# D=q+1$ for each $D\in \mathcal D$, and the following properties hold:

\begin{itemize}
	\item[(Q)] For each $D\in \mathcal D$, the map
	\[(D\times D)\smallsetminus \{(x,x) \mid x\in D\} \to \SL(2,q)\text{,}\quad (x,y)\mapsto x\inv y\text{,}\]
	is injective, i.\,e.\ the set $D^*\coloneqq\{x\inv y \mid x,y\in D,\ x\neq y\}$
	contains $q(q+1)$ elements.
	\item[(P)] The system consisting of $S\smallsetminus\{\mathds{1}\}$, all conjugates of $T\smallsetminus\{\mathds{1}\}$ and all sets $D^*$ with $D\in \mathcal D$ forms a partition of $\SL(2,q)\smallsetminus\{\mathds{1}\}$.
\end{itemize}

Set
\begin{align*}
\mathcal P \coloneqq &\ \SL(2,q)\text{,}\\
\mathcal B \coloneqq &\ \{Sg \mid g\in \SL(2,q)\} \cup \{T^h g \mid h,g\in \SL(2,q)\} \cup \{Dg \mid D\in\mathcal D, g\in\SL(2,q)\}
\end{align*}
and let the incidence relation $I\subseteq \mathcal P\times\mathcal B$ be containment.\bigskip

Then we call the incidence structure $\mathbb U_{S,\mathcal D}\coloneqq (\mathcal P,\mathcal B,I)$ an \textbf{\boldmath affine $\SL(2,q)$-unital}. Each affine $\SL(2,q)$-unital is indeed an affine unital of order $q$, see \cite[Prop. 3.15]{diss}. If $\pi$ is a parallelism on the short blocks of an affine $\SL(2,q)$-unital $\mathbb U_{S,\mathcal D}$, we call the $\pi$-closure~$\mathbb U_{S,\mathcal D}^\pi$ an \textbf{$\SL(2,q)$-($\pi$-)unital}.\bigskip

For the investigation of parallelisms of affine unitals, we only need to know how the short blocks look like. In any affine $\SL(2,q)$-unital, the set of short blocks is the set of all right cosets of the Sylow $p$-subgroups. A parallelism as in (AU5) means a partition of the set of short blocks into $q+1$ sets of $q^2-1$ pairwise non-intersecting blocks.

\begin{defi} Let $\mathfrak P$ denote the set of all Sylow $p$-subgroups of $\SL(2,q)$.
	\begin{enumerate}[(a)]
		\item We denote by $\mathfrak S\coloneqq \{Tg \mid T\in\mathfrak P, g\in\SL(2,q)\}$ the set of short blocks of any affine $\SL(2,q)$-unital.
		\item A \textbf{parallelism on $\mathfrak S$} is a partition of $\mathfrak S$ into $q+1$ sets of $q^2-1$ pairwise non-intersecting cosets.
	\end{enumerate}
\end{defi}

Note that each right coset $Tg$ is a left coset $gT^g$ of a conjugate of $T$. For each prime power $q$, there are hence two obvious parallelisms on $\mathfrak S$, namely partitioning the set of short blocks into the sets of \emph{right} cosets or into the sets of \emph{left} cosets of the Sylow $p$-subgroups. We name those two parallelisms ``flat'' and ``natural'', respectively, and denote them by the corresponding musical signs
\[ \flat \coloneqq \{\{Tg\mid g\in\SL(2,q)\}\mid T\in\mathfrak P\}\quad \text{and}\quad \natural \coloneqq \{\{gT\mid g\in\SL(2,q)\}\mid T\in\mathfrak P\}\text{.} \]

\begin{rem}
	Inspired by the construction of $\SL(2,q)$-unitals from affine $\SL(2,q)$-unitals via different parallelisms, Nagy and Mez{\H o}fi created a method of constructing new incidence structures from old ones by removing a block and attaching it again in a different way, see \emph{\cite{paramod}}. They call this method \emph{paramodification} and used it to 
	construct plenty new unitals of orders $3$ and $4$, respectively. Their new unitals also comprise the Leonids unitals \emph{(}see \emph{Section \ref{leonids})}, since these are obtained from the classical unital of order $4$ by paramodification.
\end{rem}

On any affine $\SL(2,q)$-unital $\mathbb U_{S,\mathcal D}$, the group $\SL(2,q)$ acts as a group of automorphisms by multiplication from the right. We denote this permutation group by $R\coloneqq \{\rho_h\mid h\in\SL(2,q)\}$, where $\rho_h$ acts by right multiplication with $h\in\SL(2,q)$. Any automorphism of $\SL(2,q)$ induces a permutation on the point set of $\mathbb U_{S,\mathcal D}$; we call the corresponding permutation group $\mathfrak A \cong \Aut(\SL(2,q))$. For each $a\in\GL(2,q)$, conjugation by $a$ induces a permutation $\gamma_a\in\mathfrak A$. Although not every $\alpha\in\mathfrak A$ acts as automorphism on $\mathbb U_{S,\mathcal D}$, we know that for $q\geq 3$, the full automorphism group of $\mathbb U_{S,\mathcal D}$ is a subgroup of $\mathfrak A\ltimes R$, more precisely a subgroup of $\mathfrak A_S\ltimes R$ (see \cite[Theorem 3.3]{autos}). We are therefore also interested in the stabilizers of our parallelisms under the action of $\mathfrak A\ltimes R$.\bigskip

Regarding the parallelisms $\flat$ and $\natural$, we see that both are stabilized by $\mathfrak A\ltimes R$. Note that right multiplication with $h\in\SL(2,q)$ fixes each parallel class of $\flat$ while it acts on the parallel classes of $\natural$ via conjugation on the Sylow $p$-subgroups $T\in\mathfrak P$ (since $gTh=ghT^h$).

\goodbreak

\begin{ex}\leavevmode
	\begin{enumerate}[(a)]
		\item For each prime power $q$ we may choose $S=C$ to be cyclic and $\mathcal H$ a set of blocks through $\mathds{1}$ such that $\mathbb U_{C,\mathcal H}$ is isomorphic to the affine part of the classical unital and the closure $\mathbb U_{C,\mathcal H}^\natural$ is isomorphic to the classical unital. We call $\mathbb U_{C,\mathcal H}$ the \textbf{classical affine $\SL(2,q)$-unital} and $\mathbb U_{C,\mathcal H}^\natural$ the \textbf{classical $\SL(2,q)$-unital}. See \cite[Example 3.1]{slu} or \cite[Section 3.2.2]{diss} for details.
		\item Several non-classical affine $\SL(2,q)$-unitals are known, namely one of order $4$, described in \cite{slu}, and three of order $8$, described in \cite[Section 3]{threeaff}.
	\end{enumerate}
\end{ex}

\subsection{A Class of Parallelisms for Odd Order} \label{rpa}

For each odd prime power $q$, there is at least one class of parallelisms on $\mathfrak S$ apart from $\flat$ and $\natural$. Let $q$ be odd throughout this section.\bigskip

Let $T$ be a fixed Sylow $p$-subgroup of $\SL(2,q)$, namely
\[ T\coloneqq\{\mat1x01 \mid x\in\mathbb F_q\}\text{.} \]
The normalizer of $T$ in $\GL(2,q)$ is the set of upper triangular matrices. Let $\mathbb F_q^\square$ denote the set of all squares in $\mathbb F_q$, let $\mathbb F_q^{\times,\square}$ denote the set of all squares in $\mathbb F_q^\times$ and $\mathbb F_q^{\times,\not\square}$ the set of all non-squares in $\mathbb F_q^\times$. Note that $\#\mathbb F_q^{\times,\square}=\#\mathbb F_q^{\times,\not\square}=\frac 12 (q-1)$ since $q$ is odd.
Let
\[\Omega\coloneqq \{g\in\SL(2,q)\mid \mbox{\small\ensuremath{(0,1)}}\cdot g\cdot \left(\begin{smallmatrix}1\\0\end{smallmatrix}\right) \in \mathbb F_q^\square\}\quad \text{and} \quad \Omega^\mathsf{c}\coloneqq \SL(2,q)\smallsetminus \Omega\text{.}\]

Note that $\Omega$ is a union of $T$-cosets (left as well as right cosets) since
\[ \zv01 \cdot Tg \cdot \sv10 = \{\zv01 \cdot g\cdot \sv10\} = \zv01 \cdot gT \cdot \sv10\text{.}\]

Let further
\begin{align*}
A &\coloneqq \{Tg \mid g\in \Omega\} \cup \{gT \mid g\in \Omega^\mathsf{c}\}\text{,}\\
A' &\coloneqq \{Tg \mid g\in \Omega^\mathsf{c}\} \cup \{gT \mid g\in \Omega\}
\end{align*}
and\footnote{%
	Note that in \cite{diss}, the parallelism $\pi_{\mathsf{odd}}$ is called $\pi^\square$ and $\pi_{\mathsf{odd}}'$ is called ${}^{\square}\pi$.
}
\[ \pi_{\mathsf{odd}} \coloneqq \{ A^h\mid h\in \SL(2,q)\}\text{,}\quad  \pi_{\mathsf{odd}}' \coloneqq \{ A'^h\mid h\in \SL(2,q)\}\text{.} \]

\begin{thm}\label{rpalpa}
	For odd $q$, the sets $\pi_{\mathsf{odd}}$ and $\pi_{\mathsf{odd}}'$ are parallelisms on $\mathfrak S$. With $v\in\mathbb F_q^{\times,\not\square}$, conjugation by $\mat100v$ maps $\pi_{\mathsf{odd}}$ to $\pi_{\mathsf{odd}}'$.
\end{thm}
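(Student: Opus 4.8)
The plan is to treat $A$ as a single parallel class, understand its orbit under conjugation (I read $A^h=h^{-1}Ah=\{h^{-1}Dh\mid D\in A\}$, in keeping with the notation $T^h=h^{-1}Th$), and only then read off the statements about $\pi_{\mathsf{odd}}$, $\pi_{\mathsf{odd}}'$ and the diagonal matrix. Throughout I write $\phi(g):=\zv01\, g\,\sv10$ for the lower-left entry of $g$; by the remark preceding the theorem $\phi$ is constant on every right coset $Tg$ and on every left coset $gT$, and $\Omega=\{g\mid\phi(g)\in\mathbb F_q^\square\}$.

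First I would check that $A$ is a parallel class, i.e.\ that its cosets are pairwise non-intersecting. Two distinct right cosets of $T$ are disjoint, and so are two distinct left cosets of $T$; the only thing to rule out is that a right coset $Tg$ with $g\in\Omega$ meets a left coset $g'T$ with $g'\in\Omega^{\mathsf c}$. But $\phi$ is constant equal to the square $\phi(g)$ on the first and equal to the (nonzero) non-square $\phi(g')$ on the second, and no element can have its lower-left entry be simultaneously a square and a non-square, so they are disjoint. A direct count shows $\{Tg\mid g\in\Omega\}$ has $\tfrac12(q-1)(q+2)$ members and $\{gT\mid g\in\Omega^{\mathsf c}\}$ has $\tfrac12q(q-1)$, summing to $q^2-1$; as these $q^2-1$ disjoint $q$-sets lie in $\SL(2,q)$, they partition it. Hence $A$ is a parallel class, and the same computation applies to $A'$.

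Next I would determine the orbit $\{A^h\mid h\in\SL(2,q)\}$. Conjugation by the stabilizer $N:=N_{\SL(2,q)}(T)$ (the upper-triangular subgroup, of order $q(q-1)$ and index $q+1$) fixes $A$: it fixes $T$, and for $n=\mat a*0{a^{-1}}$ it sends $\phi(g)\mapsto a^2\phi(g)$, a nonzero-square multiple, so it preserves $\Omega$, $\Omega^{\mathsf c}$ and the right/left split; thus $A^h$ depends only on $T^h$ and there are at most $q+1$ classes. To show they partition $\mathfrak S$, take any short block $B$: it is a right coset of a unique Sylow subgroup $U=T^{h_0}$ and simultaneously a left coset of a unique Sylow subgroup $V=T^{h_1}$, where $B=Ux$ and $h_1=h_0x$. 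Bringing $B$ into the $T$-picture via $B^{h_0^{-1}}=h_0Bh_0^{-1}$ resp.\ $B^{h_1^{-1}}=h_1Bh_1^{-1}$ produces, after a short computation, the right coset $T\tilde g$ resp.\ the left coset $\tilde gT$ of the \emph{same} element $\tilde g:=h_0xh_0^{-1}$. Hence $B\in A^{h_0}$ exactly when $\phi(\tilde g)$ is a square and $B\in A^{h_1}$ exactly when $\phi(\tilde g)$ is a non-square, while $\phi(\tilde g)=0$ precisely when $U=V$. Therefore every short block lies in exactly one class $A^h$; with classes of size $q^2-1$ and $\#\mathfrak S=(q+1)(q^2-1)$ this forces exactly $q+1$ pairwise disjoint classes, so $\pi_{\mathsf{odd}}$ (and likewise $\pi_{\mathsf{odd}}'$) is a parallelism.

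I expect the coincidence in the previous paragraph to be the main obstacle: one must verify that the two a priori different classes competing for a block $B$ are governed by one and the same value $\phi(\tilde g)$, and that the square/non-square dichotomy renders these options mutually exclusive and exhaustive. Granting this, the last claim is quick. For $d:=\mat100v$ with $v\in\mathbb F_q^{\times,\not\square}$ one checks $d^{-1}Td=T$ and $\phi(d^{-1}gd)=v^{-1}\phi(g)$; since $v^{-1}$ is a non-square this interchanges $\Omega$ and $\Omega^{\mathsf c}$, whence $A^{d}=A'$. As $d$ normalizes $\SL(2,q)$, conjugation by $d$ carries the $\SL(2,q)$-orbit of $A$ onto that of $A'$, that is $\pi_{\mathsf{odd}}\mapsto\pi_{\mathsf{odd}}'$, which is the assertion.
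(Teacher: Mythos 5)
Your proof is correct and follows essentially the same route as the paper: the same description of $A$, the same $a^2$-computation showing that $\N_{\SL(2,q)}(T)$ stabilizes $A$, the same square/non-square dichotomy on the lower-left entry, and the same final count, with the only organizational difference that you establish the partition property block-by-block (each short block lies in exactly one $A^h$) where the paper argues class-by-class ($A^h\cap A\neq\varnothing$ forces $h\in\N_{\SL(2,q)}(T)$). One small inaccuracy in the last step: conjugation by $\mat100v$ does not literally interchange $\Omega$ and $\Omega^{\mathsf c}$ (these sets have different sizes, since $0$ stays a square); it maps $\{g\mid \zv01\cdot g\cdot\sv10\in\mathbb F_q^{\times,\square}\}$ onto $\Omega^{\mathsf c}$ and stabilizes $\N_{\SL(2,q)}(T)$ setwise, but because the cosets $Tg=gT$ with $g\in\N_{\SL(2,q)}(T)$ belong to both $A$ and $A'$, your conclusion $A^d=A'$ still holds (the paper covers this with the parenthetical remark that $\N\subseteq\Omega$).
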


\begin{myproof}
	Since $\Omega$ is a union of $T$-cosets, the cosets in $A$ are pairwise non-intersecting and we get
	\[ \# A = \frac{\#\Omega}{\#T} + \frac{\#\Omega^\mathsf{c}}{\#T} = \frac{\#\SL(2,q)}{\#T} = q^2-1 \text{.}\]
	Hence, $A$ is a set of $q^2-1$ pairwise non-intersecting short blocks.\bigskip
	
	Let $h\in\N\coloneqq\N_{\SL(2,q)}(T)$. Then $h$ is upper triangular, say $h\coloneqq\mat a*0{\inv a}$ with $a\in\mathbb F_q^\times$. For any $g\in\SL(2,q)$, we have
	\[ \zv01 \cdot g^h\cdot \sv10 = \zv0a \cdot g\cdot \sv a0 = a^2\zv01\cdot g\cdot \sv10 \text{.}\]
	Since $a^2\in\mathbb F_q^{\times,\square}$, we get $g^h\in\Omega$ exactly if $g\in\Omega$. Hence, conjugation by~$h$ stabilizes $A$.\bigskip
	
	Now let $h\in\SL(2,q)\smallsetminus\N$ and $Tg\in A$. Then $(Tg)^h= T^hg^h$ is not a right coset of $T$. Compute $T^hg^h=g^h(T^h)^{g^h} = g^hT^{gh}$ and consider $gh\in\N$. Then $h=\inv gn$ for an $n\in\N$ and we have $g^h = g^n$. But since $g^n\in\Omega$ exactly if $g\in\Omega$ (as shown above), the coset $(Tg)^h$ is not contained in $A$. A similar consideration shows that $(gT)^h$ is not contained in $A$ if $gT\in A$ and $h\notin \N$.\bigskip
	
	Let $g,h\in\SL(2,q)$ and assume $A^h\cap A^g\neq\varnothing$. Then $A^{h\inv g}\cap A\neq\varnothing$ and we get $h\inv g\in\N$, $A^{h\inv g}=A$ and hence $A^h=A^g$. Thus, $\pi_{\mathsf{odd}} = \{A^h\mid h\in\SL(2,q)\}$ is indeed a partition of $\mathfrak S$ into $\#\SL(2,q)/\#\N = q+1$ sets of $q^2-1$ pairwise non-intersecting cosets, hence a parallelism on $\mathfrak S$.\bigskip
	
	Now let $v\in\mathbb F_q^{\times,\not\square}$ and $f\coloneqq\mat 100v$. Then $f\in\N_{\GL(2,q)}(T)$ and $g^f\in\N$ exactly if $g\in\N$. Further, for any $g\in\SL(2,q)\smallsetminus\N$, we have
	\[ \zv01 \cdot g^f\cdot \sv10 = \zv0{\inv v} \cdot g\cdot \sv 10 = \inv v\zv01\cdot g\cdot \sv10 \text{.} \]
	Hence, $g^f\in\Omega\smallsetminus\N$ exactly if $g\in\Omega^\mathsf{c}$ and we get $A^f=A'$ (note that $\N\subseteq\Omega$). For each $A^h\in \pi_{\mathsf{odd}}$, we get
	\[ (A^h)^f = A^{hf} = (A^f)^{(h^f)} = A'^{(h^f)} \in \pi_{\mathsf{odd}}'\text{.} \]
	Hence, we obtain $\pi_{\mathsf{odd}}'$ from $\pi_{\mathsf{odd}}$ via conjugation by $f$, and $\pi_{\mathsf{odd}}'$ is a parallelism on $\mathfrak S$ since $\pi_{\mathsf{odd}}$ is a parallelism on $\mathfrak S$.	
\end{myproof}

As mentioned above and according to \cite[Theorem 3.3]{autos}, the full automorphism group of any affine $\SL(2,q)$-unital of order $q\geq 3$ is a subgroup of $\mathfrak A\ltimes R$. We compute the stabilizer of the parallelism $\pi_{\mathsf{odd}}$ in $\mathfrak A\ltimes R\cong \Aut(\SL(2,q))\ltimes \SL(2,q)$ (note that $q$ is odd and hence $q\geq 3$). Recall that
$\Aut(\SL(2,q)) = \PGammaL(2,q) \coloneqq \PGL(2,q)\rtimes \Aut(\mathbb F_q)$, where automorphisms of $\mathbb F_q$ act entrywise on matrices. We denote by $\varphi$ the Frobenius automorphism $x\mapsto x^p$ on~$\mathbb F_q$. 

\begin{thm}\label{stabrpa}
	Let $c\coloneqq \mat 100{-1}$. The stabilizer of $\pi_{\mathsf{odd}}$ in the group $\mathfrak A\ltimes R$ equals
	\begin{enumerate}[\em (a)]
		\item $\mathrm{P\Sigma L}(2,q)\times \langle \rho_{-\mathds{1}}\rangle$ if $q\equiv 1\mod 4$ and
		\item $\mathrm{P\Sigma L}(2,q)\rtimes \langle \gamma_c \cdot \rho_{-\mathds{1}}\rangle$ if $q\equiv 3\mod 4$,
	\end{enumerate}
	where $\mathrm{P\Sigma L}(2,q)\coloneqq \PSL(2,q)\rtimes \Aut(\mathbb F_q)$.
\end{thm}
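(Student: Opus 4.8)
The plan is to prove the two inclusions separately: for "$\supseteq$" I would check that each of the listed generators stabilizes $\pi_{\mathsf{odd}}$, and for "$\subseteq$" I would show that nothing else in $\mathfrak A\ltimes R$ does. Throughout I write a general element of $\mathfrak A\ltimes R$ as a permutation $g\mapsto\alpha(g)\,m$ with $\alpha\in\mathfrak A\cong\PGammaL(2,q)$ and $m\in\SL(2,q)$ (the factor $\rho_m$); recall $\PGammaL(2,q)=\PGL(2,q)\rtimes\langle\varphi\rangle$ and $\mathrm{P\Sigma L}(2,q)=\PSL(2,q)\rtimes\langle\varphi\rangle$ has index $2$ in it.

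For "$\supseteq$": the inner automorphisms satisfy $\gamma_h(A^{h'})=A^{h'h}$, so $\mathrm{Inn}(\SL(2,q))\cong\PSL(2,q)$ merely permutes the parallel classes and stabilizes $\pi_{\mathsf{odd}}$. The Frobenius $\varphi$ fixes $T$ and maps $\Omega$ to $\Omega$ (a field automorphism preserves $\mathbb F_q^\square$), hence fixes the class $A$; together this gives $\mathrm{P\Sigma L}(2,q)\le\mathrm{Stab}(\pi_{\mathsf{odd}})$. Next I would treat $\rho_{-\mathds 1}$: since it negates the entry $\zv01\,g\,\sv10$, it fixes $A$ when $-1\in\mathbb F_q^\square$ (i.e.\ $q\equiv1\bmod4$) and maps $A$ to $A'$, hence $\pi_{\mathsf{odd}}$ to $\pi_{\mathsf{odd}}'$, when $-1\notin\mathbb F_q^\square$ (i.e.\ $q\equiv3\bmod4$). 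In the first case $\rho_{-\mathds 1}$ centralizes $\mathrm{P\Sigma L}(2,q)$, yielding the direct factor in (a). In the second case $c=\mat100{-1}$ has nonsquare determinant, so by Theorem \ref{rpalpa} (with $v=-1$) conjugation $\gamma_c$ also maps $\pi_{\mathsf{odd}}$ to $\pi_{\mathsf{odd}}'$; thus the two commuting involutions $\gamma_c$ and $\rho_{-\mathds 1}$ each swap $\pi_{\mathsf{odd}}\leftrightarrow\pi_{\mathsf{odd}}'$, so $\gamma_c\rho_{-\mathds 1}$ stabilizes $\pi_{\mathsf{odd}}$, and since conjugation by $c$ induces an outer (diagonal) automorphism of $\PSL(2,q)$ the resulting product is the semidirect one in (b).

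For "$\subseteq$" the crucial step is to pin down the right-multiplication part $m$. To this end I would attach to every class a canonical Sylow $p$-subgroup: a count shows that exactly $\tfrac12(q-1)(q+2)$ blocks of $A$ are right cosets of the fixed $T$, whereas every other Sylow subgroup is the right-coset subgroup of at most $q-1$ blocks of $A$; hence $T$ is the unique Sylow subgroup occurring most often, and by conjugation the class $A^h$ canonically determines $T^h$. Since under $g\mapsto\alpha(g)\,m$ a right coset of $P$ becomes a right coset of $\alpha(P)$ while a left coset of $Q$ becomes a left coset of $\alpha(Q)^m$, any $\sigma$ in the stabilizer must carry the distinguished Sylow $T^h$ of $A^h$ to $\alpha(T^h)$ (read off the right cosets) and simultaneously to $\alpha(T^h)^m$ (read off the left cosets). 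Equating these for every $h$ forces $m$ to normalize every Sylow $p$-subgroup; as the normalizers of the Sylow $p$-subgroups are the Borel subgroups and any three of them already intersect in the center, this yields $m\in\{\mathds 1,-\mathds 1\}$.

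It then remains to determine which $\alpha\in\PGammaL(2,q)$ occur. With $m=\mathds 1$ we need $\alpha(\pi_{\mathsf{odd}})=\pi_{\mathsf{odd}}$, and with $m=-\mathds 1$ we need $\alpha(\pi_{\mathsf{odd}})=\rho_{-\mathds 1}(\pi_{\mathsf{odd}})$, which is $\pi_{\mathsf{odd}}$ if $q\equiv1\bmod4$ and $\pi_{\mathsf{odd}}'$ if $q\equiv3\bmod4$. Using $\mathrm{P\Sigma L}(2,q)\le\mathrm{Stab}(\pi_{\mathsf{odd}})$ from above together with Theorem \ref{rpalpa} (every $\alpha\in\PGammaL(2,q)\smallsetminus\mathrm{P\Sigma L}(2,q)$ sends $\pi_{\mathsf{odd}}$ to $\pi_{\mathsf{odd}}'\neq\pi_{\mathsf{odd}}$), the stabilizer of $\pi_{\mathsf{odd}}$ inside $\PGammaL(2,q)$ is exactly $\mathrm{P\Sigma L}(2,q)$, while the elements sending $\pi_{\mathsf{odd}}$ to $\pi_{\mathsf{odd}}'$ form the nontrivial coset; assembling the two values of $m$ with these two possibilities produces precisely (a) and (b). I expect the main obstacle to be the isolation of the distinguished Sylow subgroup as a combinatorial invariant of a class — i.e.\ the counting argument that separates $T$ from all of its conjugates — since once $m\in\{\mathds 1,-\mathds 1\}$ is secured everything reduces to Theorem \ref{rpalpa} and to the elementary fact that the Borel subgroups of $\SL(2,q)$ meet in the center $\{\mathds 1,-\mathds 1\}$.
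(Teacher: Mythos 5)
Your proposal is correct and follows essentially the same route as the paper's own proof: show $\mathrm{P\Sigma L}(2,q)$ stabilizes $\pi_{\mathsf{odd}}$, use Theorem \ref{rpalpa} to see that $\PGL(2,q)\smallsetminus\PSL(2,q)$ swaps $\pi_{\mathsf{odd}}$ with $\pi_{\mathsf{odd}}'$, force the right-multiplication part into $\{\pm\mathds 1\}$ by comparing the right- and left-coset structure of the parallel classes, and let the behaviour of $\rho_{-\mathds 1}$ depend on whether $-1$ is a square. The only notable difference is that your explicit count ($\tfrac12(q-1)(q+2)$ right cosets of $T^h$ in $A^h$ versus at most $q-1$ of any other Sylow $p$-subgroup) makes rigorous the ``distinguished Sylow subgroup'' invariant that the paper's argument uses only implicitly; that count is correct.
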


\begin{myproof}
	Note first that the Frobenius automorphism $\varphi$ stabilizes $\Omega$ and $T$ and hence $A$ and $A'$. Thus, for each $A^h\in\pi_{\mathsf{odd}}$, we have
	\[ A^h\cdot \varphi = (A\cdot\varphi)^{h\cdot\varphi} = A^{h\cdot\varphi} \in \pi_{\mathsf{odd}} \]
	and $\langle\varphi\rangle=\Aut(\mathbb F_q)$ stabilizes $\pi_{\mathsf{odd}}$ and equally $\pi_{\mathsf{odd}}'$.\bigskip
	
	The action of $\PSL(2,q)$ obviously stabilizes $\pi_{\mathsf{odd}}$ and $\pi_{\mathsf{odd}}'$ by construction. Since the index of $\PSL(2,q)$ in $\PGL(2,q)$ equals $2$, the orbit of $\pi_{\mathsf{odd}}$ under the action of $\PGL(2,q)$ has length $1$ or $2$. From Theorem \ref{rpalpa}, we know that conjugation by $\mat 100v$ ($v\in\mathbb F_q^{\times,\not\square}$) maps $\pi_{\mathsf{odd}}$ to $\pi_{\mathsf{odd}}'$, and hence the stabilizer of $\pi_{\mathsf{odd}}$ in the action of $\PGL(2,q)$ equals $\PSL(2,q)$ and conjugation by any element in $\PGL(2,q)\smallsetminus \PSL(2,q)$ interchanges $\pi_{\mathsf{odd}}$ and $\pi_{\mathsf{odd}}'$.\bigskip
	
	Since we know now that $\PGammaL(2,q)$ stabilizes $\{\pi_{\mathsf{odd}},\pi_{\mathsf{odd}}'\}$, we need to find those elements in $R$ which also stabilize the set of these two parallelisms. Let $\rho_g\in R$ and assume $\pi_{\mathsf{odd}}\cdot\rho_g\in\{\pi_{\mathsf{odd}},\pi_{\mathsf{odd}}'\}$. Since for each $A^h\in\pi_{\mathsf{odd}}$ the automorphism $\rho_g$ maps the set of right cosets of $T^h$ in $A^h$ on a set of right cosets of $T^h$, it must then also map the set of left cosets of $T^h$ in $A^h$ on a set of left cosets of $T^h$. Hence, $g$ is contained in the normalizer of every Sylow $p$-subgroup of $\SL(2,q)$ and thus $g\in\{\pm \mathds{1}\}$. We see immediately that
	\[ \pi_{\mathsf{odd}}\cdot\rho_{-\mathds{1}} = \begin{cases} \pi_{\mathsf{odd}} &\text{ if } -1\in\mathbb F_q^{\times,\square}\text{,}\\ \pi_{\mathsf{odd}}' &\text{ if } -1\in\mathbb F_q^{\times,\not\square}\text{.} \end{cases}\]
	If $q\equiv 1\mod 4$, then $-1\in\mathbb F_q^{\times,\square}$ and $\rho_{-\mathds{1}}$ stabilizes $\pi_{\mathsf{odd}}$. Since $\rho_{-\mathds{1}}$ does not fix~$\mathds{1}$ -- while every automorphism in $\mathfrak A$ does -- and since $\rho_{-\mathds{1}}$ commutes with every automorphism in~$\mathfrak A$, statement (a) follows.
	
	If $q\equiv 3\mod 4$, then both $\rho_{-\mathds{1}}$ and conjugation by $c$ interchange $\pi_{\mathsf{odd}}$ and $\pi_{\mathsf{odd}}'$ and hence the product stabilizes $\pi_{\mathsf{odd}}$. Again, $\gamma_c\cdot\rho_{-\mathds{1}}$ does not fix $\mathds{1}$ -- while every automorphism in~$\mathfrak A$ does -- and hence the product $\mathrm{P\Sigma L}(2,q)\cdot \langle \gamma_c \cdot \rho_{-\mathds{1}}\rangle$ is semidirect, since $\rho_{-\mathds{1}}$ commutes with every automorphism in~$\mathfrak A$ and $\gamma_c$ normalizes $\mathrm{P\Sigma L}(2,q)\leq \mathfrak A$.
\end{myproof}

\begin{rem}
	Knowing the full automorphism group of $\mathbb U_{C,\mathcal H}$, we use \emph{Theorem \ref{stabrpa}} to compute that the stabilizer of $[\infty]^{\pi_{\mathsf{odd}}}$ in the full automorphism group of the closure $\mathbb U_{C,\mathcal H}^{\pi_{\mathsf{odd}}}$ has order $2e(q+1)$ \emph{(}see \emph{\cite[Corollary 5.3]{diss}}\emph{)}.
\end{rem}

\subsection{A Class of Parallelisms for Square Order}
For each square order, there also is at least one class of parallelisms on $\mathfrak S$ apart from $\flat$ and $\natural$. Consider the quadratic field extension $\mathbb F_{q^2}/\mathbb F_q$ and the unique involutory field automorphism
\[ \overline{\cdot}\colon \mathbb F_{q^2}\to \mathbb F_{q^2}\text{, } x\mapsto \overline x\coloneqq x^q\text{,} \]
with fixed field $\mathbb F_q$. We let this automorphism act entrywise on matrices over $\mathbb F_{q^2}$.
Let again $T\coloneqq\{\mat 1x01\mid x\in\mathbb F_{q^2}\}$ be a fixed Sylow $p$-subgroup of $\SL(2,q^2)$. Let further
\[\Omega\coloneqq \{g\in\SL(2,q^2)\mid \mbox{\small\ensuremath{(0,1)}}\cdot g\cdot \left(\begin{smallmatrix}1\\0\end{smallmatrix}\right) \in \mathbb F_q\}\quad \text{and} \quad \Omega^\mathsf{c}\coloneqq \SL(2,q^2)\smallsetminus \Omega\text{.}\]

As in Section \ref{rpa}, $\Omega$ is a union of $T$-cosets. Consider the action

\[ \vartheta \colon \mathbb F_{q^2}^{2\times 2} \times \GL(2,q^2) \to \mathbb F_{q^2}^{2\times 2},\ (x,h)\mapsto x\cdot\vartheta_h \coloneqq \inv hx\overline h \text{,} \]

and note that $\vartheta_h$ equals conjugation with $h$ if $h\in\GL(2,q)$.

\begin{lem}\label{omegainv}
	Let $h\in\N_{\SL(2,q^2)}(T)$. Then $\Omega\cdot\vartheta_h=\Omega$ and $\Omega^\mathsf{c}\cdot\vartheta_h=\Omega^\mathsf{c}$.
\end{lem}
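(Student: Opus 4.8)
The plan is to reduce the whole statement to the behaviour of a single matrix entry. First I would record that for $g\in\SL(2,q^2)$ the matrix $g\cdot\vartheta_h=\inv hg\overline h$ again lies in $\SL(2,q^2)$: since the involution acts entrywise we have $\det\overline h=\overline{\det h}=1$, so $\det(\inv hg\overline h)=\det g$. As $\vartheta$ is a right action, $\vartheta_h$ is a bijection of $\SL(2,q^2)$ with inverse $\vartheta_{\inv h}$. Hence it suffices to prove the equivalence $g\in\Omega\iff g\cdot\vartheta_h\in\Omega$; this says $\vartheta_h^{-1}(\Omega)=\Omega$, so applying the bijection $\vartheta_h$ gives $\Omega\cdot\vartheta_h=\Omega$, and $\Omega^\mathsf c\cdot\vartheta_h=\Omega^\mathsf c$ follows by taking complements. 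By definition $g\in\Omega$ holds exactly when the lower-left entry $\zv01\cdot g\cdot\sv10$ lies in the fixed field $\mathbb F_q$, so everything hinges on how this one entry transforms under $\vartheta_h$.

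Next I would use, as in Section \ref{rpa}, that the normalizer $\N_{\SL(2,q^2)}(T)$ consists of the upper triangular matrices of determinant $1$, so I may write $h=\mat ab0{\inv a}$ with $a\in\mathbb F_{q^2}^\times$ and $b\in\mathbb F_{q^2}$, giving $\inv h=\mat{\inv a}{-b}0a$ and $\overline h=\mat{\overline a}{\overline b}0{\inv{\overline a}}$. The first column of $\overline h$ is $\sv{\overline a}0$, so the first column of $g\overline h$ is $\overline a$ times the first column of $g$; pairing this with the second row $\zv0a$ of $\inv h$, the upper-triangular shape kills all but one term and leaves
\[ \zv01\cdot(g\cdot\vartheta_h)\cdot\sv10 = a\overline a\cdot\bigl(\zv01\cdot g\cdot\sv10\bigr)\text{.} \]
Thus $\vartheta_h$ simply rescales the decisive entry by the single scalar $a\overline a$.

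The crux is then that this scalar lies in $\mathbb F_q^\times$. Since $\overline{\overline x}=x$, we have $\overline{a\overline a}=\overline a\,a=a\overline a$, so $a\overline a$ is fixed by the involution and hence lies in $\mathbb F_q$; it is nonzero because $a\neq0$. (Equivalently, $a\overline a=a^{q+1}=\N_{\mathbb F_{q^2}/\mathbb F_q}(a)$.) Multiplication by a nonzero element of $\mathbb F_q$ maps $\mathbb F_q$ onto itself and its complement onto itself, so $a\overline a\cdot z\in\mathbb F_q\iff z\in\mathbb F_q$; taking $z=\zv01\cdot g\cdot\sv10$ yields $g\cdot\vartheta_h\in\Omega\iff g\in\Omega$, as required. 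I expect the only real obstacle to be the computational bookkeeping of the product $\inv hg\overline h$ and the correct isolation of its lower-left entry, together with the small but essential observation that the resulting scale factor $a\overline a$ is forced into the fixed field $\mathbb F_q^\times$; once those are in place the rest is formal.
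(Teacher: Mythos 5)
Your proof is correct and follows essentially the same route as the paper's: write $h$ as an upper triangular matrix $\mat a*0{\inv a}$, compute that the decisive entry $\zv01\cdot g\cdot\sv10$ is rescaled by $a\overline a\in\mathbb F_q^\times$ under $\vartheta_h$, and conclude that $\Omega$ is preserved. The extra bookkeeping you supply (bijectivity of $\vartheta_h$, passing to complements) is left implicit in the paper but is exactly the intended argument.
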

\begin{myproof}
	Since $h$ normalizes $T$, we have $h$ upper triangular, say $h\coloneqq \mat a*0{\inv a}$ with $a\in\mathbb F_{q^2}^\times$. For any $g\in\SL(2,q^2)$, we have
	\[ \mbox{\small\ensuremath{(0,1)}}\cdot (g\cdot\vartheta_h)\cdot \left(\begin{smallmatrix}1\\0\end{smallmatrix}\right) = \mbox{\small\ensuremath{(0,a)}}\cdot g\cdot \left(\begin{smallmatrix}\overline a\\0\end{smallmatrix}\right) = a\overline a\cdot\mbox{\small\ensuremath{(0,1)}}\cdot g\cdot \left(\begin{smallmatrix}1\\0\end{smallmatrix}\right) \text{.} \]
	Since $a\overline a\in\mathbb F_q^\times$, we get $g\cdot \vartheta_h\in\Omega$ exactly if $g\in\Omega$.
\end{myproof}

\begin{thm}
Let
\[ A\coloneqq\{Tg\mid g\in\Omega\}\cup\{gT\mid g\in\Omega^\mathsf{c}\}\quad \text{and}
\quad \pi_{\mathsf{sq}}\coloneqq\{ A\cdot \vartheta_h\mid h\in\SL(2,q^2)\} \text{.} \]
	Then $\pi_{\mathsf{sq}}$ is a parallelism on $\mathfrak S$.
\end{thm}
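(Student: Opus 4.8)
The plan is to mirror the proof of Theorem \ref{rpalpa}, replacing conjugation by the twisted action $\vartheta$ and keeping careful track of how $\vartheta_h$ moves a coset between ``right-coset'' and ``left-coset'' form. First I would record the structural facts I need: that $\vartheta$ is a right action, $x\cdot\vartheta_h\cdot\vartheta_{h'}=x\cdot\vartheta_{hh'}$; that each $\vartheta_h$ with $h\in\SL(2,q^2)$ is a bijection of $\SL(2,q^2)$ onto itself (indeed $\det\overline h=(\det h)^q=1$); that $\N\coloneqq\N_{\SL(2,q^2)}(T)$ consists of upper triangular matrices and is stable under the bar map; and, crucially, that $g\in\Omega$ if and only if $\overline g\in\Omega$, since the $(2,1)$-entry of $\overline g$ is $(g_{21})^q$, which is fixed by the bar map exactly when $g_{21}$ is, i.e.\ exactly when $g_{21}\in\mathbb F_q$.

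For the size of a parallel class, since $\Omega$ is a union of $T$-cosets the blocks collected in $A$ are pairwise disjoint, and $\#A=\#\Omega/\#T+\#\Omega^{\mathsf c}/\#T=\#\SL(2,q^2)/\#T=q^4-1$, the correct class size for order $q^2$. Next I would establish the two coset-transport formulas $(Tg)\cdot\vartheta_h=T^h\,(g\cdot\vartheta_h)$ and $(gT)\cdot\vartheta_h=(g\cdot\vartheta_h)\,T^{\overline h}$. For $h\in\N$ both $T^h$ and $T^{\overline h}$ equal $T$, so these formulas together with Lemma \ref{omegainv} and the equivalence $g\in\Omega\Leftrightarrow\overline g\in\Omega$ give $A\cdot\vartheta_h\subseteq A$; bijectivity of $\vartheta_h$ then upgrades this to $A\cdot\vartheta_h=A$.

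The main obstacle is the case $h\notin\N$, where I must show $A\cdot\vartheta_h\cap A=\varnothing$. The subtlety is that a transported coset is simultaneously a right coset of one conjugate of $T$ and a left coset of another, so membership in $A$ can fail only in a controlled way: a block $B\cdot\vartheta_h$ lies in $A$ only if its right subgroup is $T$ (forcing $h\in\N$) or its left subgroup is $T$. For $B=Tg$ with $g\in\Omega$ I would rewrite $(Tg)\cdot\vartheta_h=(g\cdot\vartheta_h)\,T^{g\overline h}$, so the surviving possibility is $g\overline h=n\in\N$; substituting $\overline h=g^{-1}n$ and simplifying yields $g\cdot\vartheta_h=\overline g\cdot\vartheta_{\overline n}$, which by Lemma \ref{omegainv} and $g\in\Omega\Leftrightarrow\overline g\in\Omega$ lies in $\Omega$, contradicting the requirement that a left coset in $A$ have representative in $\Omega^{\mathsf c}$. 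The symmetric computation for $B=gT$ with $g\in\Omega^{\mathsf c}$ — where the surviving case is $g^{-1}h=m\in\N$ and one finds $g\cdot\vartheta_h=\overline g\cdot\vartheta_m\in\Omega^{\mathsf c}$ — rules out that $B\cdot\vartheta_h$ is a right coset in $A$. Hence $B\cdot\vartheta_h\notin A$ in every case.

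Finally I would assemble the partition. If $A\cdot\vartheta_h\cap A\cdot\vartheta_g\neq\varnothing$, applying $\vartheta_{g^{-1}}$ gives $A\cdot\vartheta_{hg^{-1}}\cap A\neq\varnothing$, whence $hg^{-1}\in\N$ and $A\cdot\vartheta_h=A\cdot\vartheta_g$ by the previous steps; thus distinct classes are disjoint and are indexed by the right cosets of $\N$, of which there are $[\SL(2,q^2):\N]=q^2+1$. Since each class consists of $q^4-1$ pairwise disjoint short blocks and the classes are mutually disjoint, their union contains $(q^2+1)(q^4-1)=\#\mathfrak S$ short blocks and hence is all of $\mathfrak S$. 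This exhibits $\pi_{\mathsf{sq}}$ as a partition of $\mathfrak S$ into $q^2+1$ classes of $q^4-1$ pairwise non-intersecting cosets, i.e.\ a parallelism on $\mathfrak S$.
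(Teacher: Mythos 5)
Your proposal is correct and follows essentially the same route as the paper's proof: the same coset-transport identities, the same reduction of the case $h\notin\N$ to $g\overline h\in\N$ (resp.\ $\inv gh\in\N$) with the key computation $g\cdot\vartheta_h=\overline g\cdot\vartheta_{\overline n}$, and the same counting of classes via the index of $\N$. You merely make explicit a few points the paper treats as obvious (the identity $\overline\Omega=\Omega$ and the symmetric left-coset case).
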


\begin{proof}
	Note first that the cosets in $A$ are pairwise non-intersecting and that
	\[ \# A = \frac{\#\Omega}{\#T} + \frac{\#\Omega^\mathsf{c}}{\#T} = \frac{\#\SL(2,q^2)}{\#T} = q^4-1 \text{.}\]
	Hence, $A$ is indeed a set of $(q^2)^2-1$ non-intersecting short blocks.\bigskip

	Let $h\in\N\coloneqq\N_{\SL(2,q^2)}(T)$ and $Tg\in A$. Then
	\[ (Tg)\cdot\vartheta_h = \inv hTg\overline h = \inv h Th\inv hg\overline h = T^h(g\cdot\vartheta_h) = T(g\cdot\vartheta_h) \in A\text{,}\]
	since $\vartheta_h$ leaves $\Omega$ invariant (Lemma \ref{omegainv}). Analogously, $(gT)\cdot\vartheta_h\in A$ if $gT\in A$ and hence $A\cdot\vartheta_h=A$ for $h\in\N$.\bigskip
	
	Now let $h\in\SL(2,q^2)\smallsetminus\N$ and $Tg\in A$. Then $(Tg)\cdot\vartheta_h=T^h(g\cdot\vartheta_h)$ is not a right coset of $T$. Assume $(Tg)\cdot\vartheta_h=(g\cdot\vartheta_h)T^{g\overline h}$ is a left coset of $T$, i.\,e.\ $n\coloneqq g\overline h\in\N$. Hence,
	\[g\cdot\vartheta_h=\inv h g\overline h= \inv{\overline n}\overline gn = \overline g\cdot\vartheta_{\overline n} \text{,}\]
	which is in $\Omega$ exactly if $\overline g \in\Omega$. But since obviously $\overline\Omega = \Omega$ and $g\in\Omega$ since $Tg\in A$, we have $g\cdot\vartheta_h\in\Omega$ and hence $(Tg)\cdot\vartheta_h\notin A$. A similar consideration shows that $(gT)\cdot\vartheta_h\notin A$ if $gT\in A$. Thus, $(A\cdot\vartheta_h)\cap A=\varnothing$ if $h\in\SL(2,q^2)\smallsetminus\N$.\bigskip
	
	Finally, let $g,h\in\SL(2,q^2)$ and assume $(A\cdot\vartheta_g)\cap(A\cdot\vartheta_h)\neq\varnothing$. Then $(A\cdot\vartheta_{g\inv h})\cap A\neq\varnothing$ and hence $g\inv h\in\N$, $A\cdot\vartheta_{g\inv h}=A$ and $A\cdot\vartheta_g=A\cdot\vartheta_h$. Thus, $\pi_{\mathsf{sq}}=\{A\cdot\vartheta_h\mid h\in\SL(2,q^2)\}$ is indeed a partition of $\mathfrak S$ into $\#\SL(2,q^2)/\#\N=q^2+1$ sets of $(q^2)^2-1$ pairwise non-intersecting cosets, hence a parallelism on $\mathfrak S$.
	
\end{proof}

\goodbreak

\begin{rem}\label{pioddsq} \leavevmode
	\begin{enumerate}[\em (a)]
		\item For each parallel class $X$ of $\pi_{\mathsf{sq}}$, there exists a unique Sylow $p$-subgroup $P\in\mathfrak P$ such that $X$ consists of $q^3-1$ right cosets of $P$ (of which $q^2-1$ are left cosets of $\overline P$) and of $q^4-q^3$ left cosets of $\overline P$, none of which is a right coset of $P$.
		\item Applying inversion on $\SL(2,q^2)$ to the parallelism $\pi_{\mathsf{sq}}$ yields another parallelism $\inv{\pi_{\mathsf{sq}}}$. Since no element of $\mathfrak A\ltimes R$ maps $\pi_{\mathsf{sq}}$ to $\inv{\pi_{\mathsf{sq}}}$, the two parallelisms are not equivalent in any affine $\SL(2,q^2)$-unital.
		\item  If $q^2$ is odd, neither $\pi_{\mathsf{sq}}$ nor $\inv{\pi_{\mathsf{sq}}}$ are equivalent to $\pi_{\mathsf{odd}}$.
	\end{enumerate}
\end{rem}

As for $\pi_{\mathsf{odd}}$, we compute the stabilizer of $\pi_{\mathsf{sq}}$ in $\mathfrak A\ltimes R$. We abbreviate $\Gamma\coloneqq \mathrm{Stab}_{\mathfrak A\ltimes R}(\pi_{\mathsf{sq}})$.

\goodbreak
\begin{lem} \label{fqqq}
	Let $x\in\mathbb F_{q^2}^\times$ and $c\in\mathbb F_q^\times$ such that $x\overline x= c^2$. Then $x\in\mathbb F_{q^2}^{\times,\square}$.
\end{lem}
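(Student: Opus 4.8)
The plan is to reduce the claim to a single exponent computation in the cyclic group $\mathbb{F}_{q^2}^\times$. First I would dispose of the even case: if $p=2$, squaring is an automorphism of $\mathbb{F}_{q^2}$, so every nonzero element is a square and the statement is trivial. Hence I may assume $q$ odd, in which case $\mathbb{F}_{q^2}^\times$ is cyclic of even order $q^2-1$, and the usual criterion applies: an element $y\in\mathbb{F}_{q^2}^\times$ lies in $\mathbb{F}_{q^2}^{\times,\square}$ if and only if $y^{(q^2-1)/2}=1$.

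Next I would rewrite the hypothesis multiplicatively. Since $\overline x = x^q$, the product $x\overline x$ is the norm $x^{q+1}$, so the assumption reads $x^{q+1}=c^2$. The key step is then the computation
\[ x^{(q^2-1)/2} = \bigl(x^{q+1}\bigr)^{(q-1)/2} = \bigl(c^2\bigr)^{(q-1)/2} = c^{\,q-1} = 1, \]
where I use the factorization $(q^2-1)/2=(q+1)\cdot(q-1)/2$ and the fact that $c^{q-1}=1$ because $c$ lies in $\mathbb{F}_q^\times$, a group of order $q-1$. This gives $x^{(q^2-1)/2}=1$, hence $x\in\mathbb{F}_{q^2}^{\times,\square}$.

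There is essentially no technical obstacle; the whole content is in choosing the factorization of the exponent $(q^2-1)/2$ so that the hypothesis $x^{q+1}=c^2$ can be substituted. The one point worth emphasizing is that it is genuinely used that the norm $x\overline x$ is a \emph{square} $c^2$ in the base field and not merely some element of $\mathbb{F}_q^\times$: the norm map $\mathbb{F}_{q^2}^\times\to\mathbb{F}_q^\times$ is surjective, so without this assumption $\bigl(x\overline x\bigr)^{(q-1)/2}$ could equal $-1$, and $x$ need not be a square. Writing the norm as $c^2$ is exactly what makes $c^{q-1}=1$ collapse the expression to $1$.
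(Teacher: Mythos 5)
Your proof is correct and takes essentially the same route as the paper: after disposing of the even case, both arguments reduce the claim to the parity of the discrete logarithm of $x$ in the cyclic group $\mathbb F_{q^2}^\times$. The only cosmetic difference is that you invoke the criterion $x^{(q^2-1)/2}=1$ and factor the exponent as $(q+1)\cdot\tfrac{q-1}{2}$, whereas the paper writes $x$ and $c$ as explicit powers of a fixed generator and compares exponents; your version is, if anything, slightly cleaner.
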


\begin{proof}
	If $q$ is even, then every element in $\mathbb F_{q^2}$ is a square. Let $q$ be odd and let $z$ be a generator of $\mathbb F_{q^2}^\times$. Note that $\mathbb F_q^\times=\{z^{l(q+1)}\mid l\in\mathbb Z\}$ and let $k,l\in\mathbb Z$ such that $x=z^k$ and $c=z^{l(q+1)}$. Then
	\[ z^{k(q+1)} = x\overline x = c^2 = z^{2l(q+1)} \text{,}\]
	and we get $k-2l\equiv 0\mod q^2-1$. Since $q^2-1$ is even, we get that $k$ is even and hence $x$ is a square.
\end{proof}

\goodbreak

\begin{lem}\label{lemstabqpa} \leavevmode
	\begin{enumerate}[\em (a)]
		\item $\Aut(\mathbb F_{q^2})\leq \Gamma$.
		\item Let $r\in\SL(2,q^2)$. We have $\rho_r \in\Gamma$ exactly if $r\in\{\pm\mathds{1}\}$.
		\item Let $d\in\mathbb F_{q^2}^\times$ and $a\coloneqq\mat d001$. Then $d\in\mathbb F_{q^2}^{\times,\square}$ exactly if there exists $r\in\SL(2,q^2)$ with $\gamma_a\rho_r\in\Gamma$.
		\item Let $a\in\GL(2,q^2)$. Then $\det(a)\in\mathbb F_{q^2}^{\times,\square}$ exactly if there exists $r\in\SL(2,q^2)$ with $\gamma_a\rho_r\in\Gamma$.
		\item Let $a\in\GL(2,q^2)$, let $r,s\in\SL(2,q^2)$ and assume $\gamma_a\rho_r,\gamma_a\rho_s\in\Gamma$. Then $s\in\{\pm r\}$.
	\end{enumerate}
\end{lem}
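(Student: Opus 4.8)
The linchpin for the whole lemma is that the $\vartheta$-action already lives inside $\mathfrak A\ltimes R$: since $\det(\inv h\overline h)=1$ for $h\in\SL(2,q^2)$, a direct computation gives $x\cdot\vartheta_h=\inv h x\overline h=(\inv h x h)(\inv h\overline h)=(x\cdot\gamma_h)\cdot\inv h\overline h$, that is
\[ \vartheta_h=\gamma_h\,\rho_{\inv h\overline h}\in\mathfrak A\ltimes R\text{,}\quad h\in\SL(2,q^2)\text{.}\]
Hence $\Theta\coloneqq\{\vartheta_h\mid h\in\SL(2,q^2)\}\leq\Gamma$ (these are exactly the maps producing the classes of $\pi_{\mathsf{sq}}$), and crucially the $\mathfrak A$-part $\gamma_h$ appearing here always lies in $\PSL(2,q^2)$. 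The second tool is a counting refinement of Remark \ref{pioddsq}(a): for a class $X$ with distinguished subgroup $P$, exactly $q^3-1$ blocks of $X$ are right cosets of $P$, while for any $Q\neq P$ at most $q^2-1$ blocks of $X$ are right cosets of $Q$ (the $q^4-q^3$ non-distinguished blocks are left cosets of $\overline P$, and globally only $q^2-1$ left cosets of a fixed subgroup are right cosets of a fixed $Q$). As $q^3-1>q^2-1$, the distinguished subgroup of a class is recovered as the unique subgroup admitting $q^3-1$ of its right cosets in the class, and $A\cdot\vartheta_h\mapsto T^h$ is a bijection $\pi_{\mathsf{sq}}\to\mathfrak P$.

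For (a), every $\psi\in\Aut(\mathbb F_{q^2})$ fixes $T$ and fixes $\Omega$ (because $\mathbb F_q$ is the unique subfield of its order), so $A\cdot\psi=A$; as $\psi$ commutes with $\overline{\cdot}$, one gets $\vartheta_h\psi=\psi\,\vartheta_{\psi(h)}$, whence $(A\cdot\vartheta_h)\cdot\psi=A\cdot\vartheta_{\psi(h)}\in\pi_{\mathsf{sq}}$. For (b), the central $\rho_{\pm\mathds1}$ commute with every $\vartheta_h$ and fix $A$, hence fix each class. Conversely, $\rho_r$ preserves the right-coset-subgroup of each block, so by the counting tool it preserves the distinguished subgroup of every class and therefore (bijectivity of $\Phi$) fixes each class; within a class it sends the non-distinguished blocks $g\overline P$ to $gr\,\overline P^{\,r}$, which — being again non-distinguished, hence left cosets of $\overline P$ — forces $\overline P^{\,r}=\overline P$ for every class. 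Thus $r$ normalizes every Sylow $p$-subgroup, so $r\in\bigcap_P\N(P)=\{\pm\mathds1\}$. Part (e) is then immediate: $\Gamma$ is a group, and $(\gamma_a\rho_r)(\gamma_a\rho_s)^{-1}=\rho_{a(rs^{-1})a^{-1}}\in\Gamma$, so (b) gives $a(rs^{-1})a^{-1}\in\{\pm\mathds1\}$, whence $rs^{-1}\in\{\pm\mathds1\}$ and $s\in\{\pm r\}$.

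For (c), the easy direction uses the linchpin: if $d=c^2$ then $\gamma_{\mat d001}=\gamma_{\mat c00{\inv c}}$ (same projective point), and $\vartheta_{\mat c00{\inv c}}=\gamma_{\mat d001}\,\rho_{r}\in\Gamma$ with $r=\inv h\overline h$, $h=\mat c00{\inv c}$. Part (d) reduces to (c): writing $a=\mat{d}001\cdot m$ with $d=\det a$ and $m\in\SL(2,q^2)$, and using $\gamma_m=\vartheta_m\,\rho_{\inv m\overline m}^{-1}$ together with the fact that conjugation by $\vartheta_m\in\Gamma$ sends pure right-multiplications to pure right-multiplications, the existence of an admissible $r$ for $\gamma_a$ is equivalent to one for $\gamma_{\mat d001}$; so (d) follows from (c) with $d=\det a$.

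\textbf{The main obstacle} is the converse of (c): that $\gamma_a\rho_r\in\Gamma$ with $a=\mat d001$ forces $d\in\mathbb F_{q^2}^{\times,\square}$. Applying $\gamma_a\rho_r$ to the base class $A$ and using the counting tool, the image is a class with distinguished subgroup $T$, hence equals $A$; matching the left cosets forces $T^r=T$, i.e.\ $r\in\N=\N_{\SL(2,q^2)}(T)$, and the $\zv01\cdot(-)\cdot\sv10$-entry computation (as in Lemma \ref{omegainv}) gives $db\in\mathbb F_q^\times$, where $r=\mat bs0{\inv b}$. I then apply $\gamma_a\rho_r$ to a generic class $A\cdot\vartheta_v$: via $\vartheta_v(\gamma_a\rho_r)=\gamma_{va}\,\rho_{(\inv v\overline v)^a r}$ the image is forced to be a class with distinguished subgroup $T^{va}$, and the requirement that its left cosets be left cosets of $\overline{T^{va}}$ pins down the fixed point of the relevant unipotent subgroup, yielding $s=0$ and $b^2=\overline d/d$. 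Combining this with $db\in\mathbb F_q^\times$ gives $(db)^2=d^2b^2=d\overline d$ with $db\in\mathbb F_q^\times$, so Lemma \ref{fqqq} (with $x=d$, $c=db$) yields $d\in\mathbb F_{q^2}^{\times,\square}$; conversely this same $b$ produces an element of $\Gamma$, closing the equivalence. The delicate point throughout is keeping track of which Sylow subgroup is ``distinguished'' after an automorphism is applied, which the counting tool renders unambiguous; the genuinely new input is the generic-class computation converting the square question into the norm condition supplied by Lemma \ref{fqqq}.
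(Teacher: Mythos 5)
Your proof is correct and follows essentially the same route as the paper: the identity $\vartheta_h=\gamma_h\rho_{\inv h\overline h}$, the coset-type analysis from Remark \ref{pioddsq}(a) to force the $R$-part into normalizers, the $\Omega$-invariance computation giving $db\in\mathbb F_q^\times$ (the paper's $\lambda\overline d\in\mathbb F_q^\times$), and Lemma \ref{fqqq} to conclude squareness. The only difference is presentational: the paper extracts $r=\lambda\inv a\overline a$ in one step from $ar\inv{\overline a}\in\mathrm Z(\GL(2,q^2))$, whereas you split this into the base-class and generic-class computations, arriving at the same conclusion.
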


\begin{proof}
	\begin{enumerate}[(a)]
		\item As in the proof of Theorem \ref{stabrpa}, we have that the Frobenius automorphism $\varphi\in\Aut(\mathbb F_{q^2})$ stabilizes $\Omega$ and $T$ and hence $A$. Further, $\varphi$ and $\overline\cdot$ commute and we thus get $\pi_{\mathsf{sq}}\cdot\varphi=\pi_{\mathsf{sq}}$ and hence $\Aut(\mathbb F_{q^2})\leq \Gamma$.
		
		\item Note first that $\Omega\cdot\rho_{-\mathds{1}}=\Omega$ and hence $A\cdot\rho_{-\mathds{1}}=A$. Since for any $h$, the permutation~$\rho_{-\mathds{1}}$ commutes with $\vartheta_h$, we get $\pi_{\mathsf{sq}}\cdot\rho_{-\mathds{1}}=\pi_{\mathsf{sq}}$.
		
		Let $r\in\SL(2,q^2)$ and $P\in\mathfrak P$. Recall Remark \ref{pioddsq}
		(a). The permutation $\rho_r$ maps each set of right cosets of $P$ to a set of right cosets of $P$ and each set of left cosets of $\overline P$ to a set of left cosets of $\overline P^r$. Hence, if $\rho_r\in\Gamma$, then $r\in\N_{\SL(2,q^2)}(P)$ for any $P\in\mathfrak P$ and thus $r\in\{\pm\mathds{1}\}$.
		
		\item Let $d=c^2\in\mathbb F_{q^2}^{\times,\square}$ and set $a'\coloneqq \inv c\cdot a \in\SL(2,q^2)$. Set $r\coloneqq \inv{a'}\overline{a'}$. Then $\gamma_a\rho_r=\gamma_{a'}\rho_r= \vartheta_{a'} \in \Gamma$.
		
		Conversely, let $d\in\mathbb F_{q^2}^\times$ and assume that there exists $r\in\SL(2,q^2)$ such that $\gamma_a\rho_r\in\Gamma$. As above, $\gamma_a\rho_r$ maps each set of right cosets of $P$ to a set of right cosets of $P^a$ and each set of left cosets of $\overline P$ to a set of left cosets of $\overline P^{ar}$. Hence, we get $ar\inv{\overline a} \in \bigcap_{P\in\mathfrak P}\N_{\GL(2,q^2)}(P) = \mathrm Z(\GL(2,q^2))$ and there exists $\lambda\in\mathbb F_{q^2}^\times$ such that $r=\lambda \inv a\overline a$. Since $r\in\SL(2,q^2)$, we get $1=\det(r)=\lambda^2\inv d\overline d$. Since $a$ normalizes $T$ and $\gamma_a\rho_r$ stabilizes $\pi_{\mathsf{sq}}$, we get that $\gamma_a\rho_r$ stabilizes $A$ and hence $\Omega$. Compute
		\[ \zv01 \cdot g^ar \cdot \sv 10 = \zv 01 \cdot \mat {\inv d}001 g \lambda \mat{\overline d}001 \cdot \sv 10 = \lambda\overline d \cdot \zv 01 g \sv 10 \text{.} \]
		Hence, $\lambda \overline d\in \mathbb F_q^\times$. But then $d\overline d = (d\inv{\overline d})\overline d^2 = \lambda^2 \overline d^2 = (\lambda\overline d)^2$ and we get $d\in\mathbb F_{q^2}^{\times,\square}$, according to Lemma \ref{fqqq}.
		
		\item Let $d\coloneqq\mat{\det a}001$ and $a'\coloneqq \inv d a\in\SL(2,q^2)$. Set $s\coloneqq \inv{a'}\overline{a'}$. Then $\gamma_{a'}\rho_s=\vartheta_{a'}\in\Gamma$. Assume $\det a\in\mathbb F_{q^2}^{\times,\square}$. According to (c), there exists $x\in\SL(2,q^2)$ such that $\gamma_d\rho_x\in\Gamma$. Set $r\coloneqq x^{a'}s\in\SL(2,q^2)$. Then
		\[\gamma_a\rho_r = \gamma_d\gamma_{a'} \rho_{x^{a'}}\rho_s = \gamma_d\gamma_{a'}\inv{(\gamma_{a'})}\rho_x \gamma_{a'}\rho_s = \gamma_d\rho_x \cdot \gamma_{a'}\rho_s \in \Gamma \text{.} \]
		Analogously, if there exists $r\in\SL(2,q^2)$ such that $\gamma_a\rho_r\in\Gamma$, then we have $\gamma_d\rho_{(r\inv s)^{\inv{a'}}} \in \Gamma$ and the statement follows with (c).
		
		\item If $\gamma_a\rho_r\in\Gamma$ and $\gamma_a\rho_s\in\Gamma$ then $\rho_{(r\inv s)^{\inv a}} = \gamma_a\rho_r(\gamma_a\rho_s)^{-1} \in \Gamma$. According to (b), we thus have $ar\inv s\inv a\in\{\pm\mathds{1}\}$ and hence $s\in\{\pm r\}$. \qedhere
	\end{enumerate}
\end{proof}

\begin{thm} Let	$q\coloneqq p^e$ and $\Gamma\coloneqq \mathrm{Stab}_{\mathfrak A\ltimes R}(\pi_{\mathsf{sq}})$. Then
	\begin{enumerate}[\em (a)]
		\item $\Gamma = (\langle \vartheta_h \mid h\in\SL(2,q^2) \rangle \rtimes \Aut(\mathbb F_{q^2})) \times \langle \rho_{-\mathds{1}} \rangle \cong \mathrm{P\Sigma L}(2,q^2) \times \langle \rho_{-\mathds{1}} \rangle$.
		\item $\#\Gamma = 2e(q^2-1)q^2(q^2+1)$.
	\end{enumerate}
\end{thm}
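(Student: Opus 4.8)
The plan is to prove the two inclusions of (a) separately, letting Lemma~\ref{lemstabqpa} carry the essential content while the present argument is mainly assembly. For the inclusion ``$\supseteq$'', I would note that each $\vartheta_h$ with $h\in\SL(2,q^2)$ stabilizes $\pi_{\mathsf{sq}}$ by the very definition of $\pi_{\mathsf{sq}}$ as the $\vartheta$-orbit of $A$, that $\Aut(\mathbb F_{q^2})\leq\Gamma$ by Lemma~\ref{lemstabqpa}(a), and that $\rho_{-\mathds 1}\in\Gamma$ by Lemma~\ref{lemstabqpa}(b); hence the group on the right-hand side of (a) is contained in $\Gamma$. To pin down its isomorphism type, I would record that $h\mapsto\vartheta_h$ is a homomorphism (since $\vartheta$ is a right action, $\vartheta_h\vartheta_{h'}=\vartheta_{hh'}$) whose kernel consists of those $h$ with $\inv hx\overline h=x$ for all $x$; evaluating at $x=\mathds 1$ forces $\overline h=h$, i.e.\ $h\in\SL(2,q)$, and then $\vartheta_h=\gamma_h$ is trivial exactly for $h\in\{\pm\mathds 1\}$. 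Thus $\langle\vartheta_h\rangle\cong\PSL(2,q^2)$, and since $\varphi$ conjugates $\vartheta_h$ to $\vartheta_{h^\varphi}$, the extension by $\Aut(\mathbb F_{q^2})$ realises the natural field action and yields $\mathrm{P\Sigma L}(2,q^2)$.

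For the reverse inclusion I would take an arbitrary $\sigma\in\Gamma$ and first strip off the field part: writing $\mathfrak A=\PGammaL(2,q^2)=\PGL(2,q^2)\rtimes\Aut(\mathbb F_{q^2})$ and using $\Aut(\mathbb F_{q^2})\leq\Gamma$, I may multiply $\sigma$ by a suitable power of $\varphi$ to reduce to $\sigma=\gamma_a\rho_r$ with $a\in\GL(2,q^2)$ and $r\in\SL(2,q^2)$. Lemma~\ref{lemstabqpa}(d) then gives $\det a\in\mathbb F_{q^2}^{\times,\square}$, so writing $\det a=c^2$ and $a'\coloneqq\inv ca\in\SL(2,q^2)$ we have $\gamma_a=\gamma_{a'}$ and $\vartheta_{a'}=\gamma_{a'}\rho_{\inv{a'}\overline{a'}}\in\Gamma$. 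Now both $\gamma_{a'}\rho_r$ and $\gamma_{a'}\rho_{\inv{a'}\overline{a'}}$ lie in $\Gamma$, so Lemma~\ref{lemstabqpa}(e) forces $r\in\{\pm\inv{a'}\overline{a'}\}$; as $\rho_{-\mathds 1}$ is central this means $\sigma\in\{\vartheta_{a'},\,\vartheta_{a'}\rho_{-\mathds 1}\}$. Reinserting the field automorphism then shows $\Gamma\subseteq(\langle\vartheta_h\rangle\rtimes\Aut(\mathbb F_{q^2}))\langle\rho_{-\mathds 1}\rangle$, which together with ``$\supseteq$'' gives equality.

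It then remains to check that the product is direct and to read off the order. The factor $\langle\rho_{-\mathds 1}\rangle$ commutes with every element of $\mathfrak A$, hence with all $\vartheta_h$ and all field automorphisms; and it meets $\langle\vartheta_h\rangle\rtimes\Aut(\mathbb F_{q^2})$ trivially, because the projection $\mathfrak A\ltimes R\to\mathfrak A$ sends this subgroup onto $\mathrm{P\Sigma L}(2,q^2)$ while killing $\rho_{-\mathds 1}$, and the only $\vartheta_h$ lying in the kernel $R$ is the identity. This establishes (a). For (b) I would multiply orders, using $\#\mathrm{P\Sigma L}(2,q^2)=2e\cdot\#\PSL(2,q^2)$ with $\#\PSL(2,q^2)=\tfrac{1}{\gcd(2,q^2-1)}q^2(q^2-1)(q^2+1)$ and $\#\langle\rho_{-\mathds 1}\rangle=\gcd(2,q^2-1)$; the factor $\gcd(2,q^2-1)$ cancels, yielding $\#\Gamma=2e\,q^2(q^2-1)(q^2+1)$.

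I expect the main obstacle to be not any single computation but the book-keeping of the two normalisations at once: reducing a general stabilising element to the form $\gamma_a\rho_r$ and then correctly absorbing the scalar $c$ and the sign ambiguity from Lemma~\ref{lemstabqpa}(e) into a single $\vartheta_{a'}$ up to $\rho_{-\mathds 1}$. A secondary delicate point is that $\langle\rho_{-\mathds 1}\rangle$ degenerates in characteristic two, where $\rho_{-\mathds 1}=\mathrm{id}$; I would emphasise that the order formula is nevertheless uniform precisely because the factor $\gcd(2,q^2-1)$ by which $\langle\rho_{-\mathds 1}\rangle$ shrinks is exactly the factor by which $\#\PSL(2,q^2)$ grows.
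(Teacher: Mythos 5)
Your proposal is correct and follows exactly the route the paper intends: the paper's proof is simply ``Directly from Lemma~\ref{lemstabqpa}'', and your argument is precisely the assembly of parts (a), (b), (d), (e) of that lemma (plus the observation that the $\vartheta_h$ stabilize $\pi_{\mathsf{sq}}$ by construction) that this citation leaves to the reader. Your additional checks — the kernel of $h\mapsto\vartheta_h$, the centrality of $\rho_{-\mathds 1}$, and the uniform cancellation of $\gcd(2,q^2-1)$ in the order formula — are all sound.
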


\begin{proof}
	Directly from Lemma \ref{lemstabqpa}.
\end{proof}

\begin{rem} \leavevmode
	\begin{enumerate}[\em (a)]
		\item Let $\pi$ be a parallelism on $\mathfrak S$ and $\inv{\pi}$ the parallelism on $\mathfrak S$ obtained by inversion. The stabilizers $\mathrm{Stab}_{\mathfrak A\ltimes R}(\pi)$ and $\mathrm{Stab}_{\mathfrak A\ltimes R}(\inv\pi)$ are isomorphic via $\psi \colon \alpha\rho_r\mapsto \alpha\gamma_r\rho_{\inv r}$.
		\item We have $\Gamma\cdot\psi = \Gamma$ and thus $\mathrm{Stab}_{\mathfrak A\ltimes R}(\inv{\pi_{\mathsf{sq}}})=\mathrm{Stab}_{\mathfrak A\ltimes R}(\pi_{\mathsf{sq}})$.
	\end{enumerate}
\end{rem}

\begin{rem}
	For the classical affine $\SL(2,q^2)$-unital $\mathbb U_{C,\mathcal H}$, the stabilizer $\Aut(\mathbb U_{C,\mathcal H}^{\pi_{\mathsf{sq}}})_{[\infty]}=\Aut(\mathbb U_{C,\mathcal H}^{\inv{\pi_{\mathsf{sq}}}})_{[\infty]}$ is isomorphic to $(\mathrm{P\Sigma L}(2,q^2)_C)\times \langle \rho_{-\mathds{1}}\rangle$. The order of $\Aut(\mathbb U_{C,\mathcal H}^{\pi_{\mathsf{sq}}})_{[\infty]}$ equals $4e(q^2+1)$.	
	For each affine $\SL(2,q^2)$-unital $\mathbb U_{S,\mathcal D}$, the order of $\Aut(\mathbb U_{S,\mathcal D}^{\pi_{\mathsf{sq}}})_{[\infty]}=\Aut(\mathbb U_{S,\mathcal D}^{\inv{\pi_{\mathsf{sq}}}})_{[\infty]}$ equals the order of $\Aut(\mathbb U_{S,\mathcal D})_\mathds{1}$.
\end{rem}

\subsection{The Leonids Unitals of Order Four}\label{leonids}

For small orders, parallelisms on $\mathfrak S$ can be found using a computer. For $q\in\{3,5\}$, the classical affine $\SL(2,q)$-unital $\mathbb U_{C,\mathcal H}$ represents the only isomorphism type of affine $\SL(2,q)$-unitals (see \cite[Theorem 3.3]{slu} and \cite[Theorem 6.1]{diss}). An exhaustive search using GAP \cite{gap} shows that up to equivalence, the three known parallelisms $\flat$, $\natural$ and $\pi_{\mathsf{odd}}$ are the only ones existing for $q\in\{3,5\}$ (see \cite[Section 6.2.1]{diss} for details).\bigskip

For $q=4$, there exist two isomorphism types of affine $\SL(2,q)$-unitals, represented by the classical affine $\SL(2,4)$-unital $\mathbb U_{C,\mathcal H}$ and the non-classical affine $\SL(2,4)$-unital described in \cite{slu}, which we denote by $\mathbb U_{C,\mathcal E}$. The full automorphism group of $\mathbb U_{C,\mathcal H}$ is $\mathfrak A_C\ltimes R\cong (C_5\rtimes C_4)\ltimes \SL(2,4)$ and the full automorphism group of $\mathbb U_{C,\mathcal E}$ is of type $C_4\ltimes \SL(2,4)$ and a subgroup of $\Aut(\mathbb U_{C,\mathcal H})$.\bigskip

For order $4$, an exhaustive computer search yields $182$ parallelisms on $\mathfrak S$. We consider the actions of the automorphism groups $\Aut(\mathbb U_{C,\mathcal H})$ and $\Aut(\mathbb U_{C,\mathcal E})$ on the set of $182$ parallelisms found by GAP. The orbit lengths of these actions are listed in Table \ref{paraorbits}.

\begin{table}
	\centering
	\captionabove{Orbit lengths on the set of $182$ parallelisms for order $4$}
	\label{paraorbits}
	\begin{tabular}{cc}\toprule
		$\Aut(\mathbb U_{C,\mathcal H})$	& $\Aut(\mathbb U_{C,\mathcal E})$ \\\midrule
		$1$				& $1$	\\
		$1$				& $1$	\\\midrule
		$30$			& $24$	\\
						& $6$	\\
		$25$			& $20$	\\
						& $5$	\\
		$5$				& $5$	\\
		$60$			& $60$	\\
		$60$			& $60$	\\\bottomrule
	\end{tabular}
\end{table}

Apart from $\flat$ and $\natural$, which are invariant under the action of $\mathfrak A\ltimes R$, there are hence seven pairwise non-equivalent parallelisms in $\mathbb U_{C,\mathcal E}$. We name representatives of those parallelisms $\pi_1,\ldots,\pi_7$ such that $\pi_1$ and $\pi_2$ as well as $\pi_3$ and $\pi_4$ are equivalent in $\mathbb U_{C,\mathcal H}$. Following the numbering in \cite[Table 6.2]{diss}, the parallelism $\pi_7$ is equivalent to $\pi_{\mathsf{sq}}$ and $\pi_6$ is equivalent to $\inv\pi_{\mathsf{sq}}$. In fact, the construction of the parallelism $\pi_{\mathsf{sq}}$ was inspired by the discovery of those parallelisms and partly answers one of the open problems stated in the author's Ph.\,D. thesis (\cite[Question 6 in Chapter 7]{diss}). Completing the two affine unitals with the seven parallelisms $\pi_1,\ldots,\pi_7$, we obtain twelve pairwise non-isomorphic $\SL(2,q)$-unitals, namely

\[ \mathbb U_{C,\mathcal H}^{\pi_2}\text{, } \mathbb U_{C,\mathcal H}^{\pi_4}\text{, } \mathbb U_{C,\mathcal H}^{\pi_5}\text{, } \mathbb U_{C,\mathcal H}^{\pi_6}\text{, } \mathbb U_{C,\mathcal H}^{\pi_7}\text{, } \mathbb U_{C,\mathcal E}^{\pi_1}\text{, } \mathbb U_{C,\mathcal E}^{\pi_2}\text{, } \mathbb U_{C,\mathcal E}^{\pi_3}\text{, } \mathbb U_{C,\mathcal E}^{\pi_4}\text{, }  \mathbb U_{C,\mathcal E}^{\pi_5}\text{, } \mathbb U_{C,\mathcal E}^{\pi_6}\text{, } \mathbb U_{C,\mathcal E}^{\pi_7}\text{.} \]

Since those parallelisms for order $4$ were found during the Leonid meteor shower in November 2018, we call the twelve resulting unitals \textbf{Leonids unitals}.\bigskip

Knowing all affine $\SL(2,4)$-unitals and all parallelisms on $\mathfrak S$ for order~$4$, we state the following

\goodbreak

\begin{thm}[by exhaustive computer search]
	There are exactly $16$ isomorphism types of $\SL(2,4)$-unitals, represented by $\mathbb U_{C,\mathcal H}^\natural$, $\mathbb U_{C,\mathcal H}^\flat$, $\mathbb U_{C,\mathcal E}^\natural$, $\mathbb U_{C,\mathcal E}^\flat$ and the twelve Leonids unitals. \qed
\end{thm}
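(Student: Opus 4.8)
The plan is to split the claim into an upper bound (at most sixteen isomorphism types) and a lower bound (the sixteen listed unitals are pairwise non-isomorphic), and to note that only the latter genuinely needs the exhaustive search advertised in the label. The two inputs I would take for granted are the cited facts that for order $4$ there are exactly two affine $\SL(2,4)$-unitals up to isomorphism, $\mathbb U_{C,\mathcal H}$ and $\mathbb U_{C,\mathcal E}$, and that there are exactly $182$ parallelisms on the (shared) set of short blocks $\mathfrak S$.

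For the upper bound I would reason purely structurally. By definition every $\SL(2,4)$-unital is a $\pi$-closure $\mathbb U^\pi$ of an affine $\SL(2,4)$-unital; up to isomorphism the affine part is $\mathbb U_{C,\mathcal H}$ or $\mathbb U_{C,\mathcal E}$, and $\pi$ is one of the $182$ parallelisms. Fixing the affine part, \cite[Proposition 3.12]{diss} identifies the closures \emph{up to isomorphisms sending $[\infty]$ to $[\infty]$} with the equivalence classes of parallelisms, that is, with the orbits of the automorphism group of the affine unital acting on the $182$ parallelisms. By Table~\ref{paraorbits} these are the $7$ orbits for $\mathbb U_{C,\mathcal H}$ and the $9$ orbits for $\mathbb U_{C,\mathcal E}$, whose representatives are precisely the sixteen unitals in the statement (the $\flat$- and $\natural$-closures of each affine unital together with the twelve Leonids unitals). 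Moreover no isomorphism fixing $[\infty]$ can link a closure of $\mathbb U_{C,\mathcal H}$ to one of $\mathbb U_{C,\mathcal E}$, since deleting $[\infty]$ and its points would then produce an isomorphism of the two affine parts, contradicting $\mathbb U_{C,\mathcal H}\not\cong\mathbb U_{C,\mathcal E}$. Hence there are exactly sixteen isomorphism-types-with-$[\infty]$-fixed, and because ordinary isomorphism is coarser, at most sixteen isomorphism types in all.

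For the lower bound it remains to exclude isomorphisms between the sixteen representatives that do \emph{not} respect the distinguished block $[\infty]$ — exactly the content invisible to parallelism equivalence, and the reason a computation is needed. Concretely I would realise each of the sixteen $2$-$(65,5,1)$ designs explicitly, compute the order of its full automorphism group as a first separating invariant, and for any representatives sharing the same order fall back on a canonical-form / design-isomorphism routine in GAP \cite{gap} to certify pairwise non-isomorphism. Since the enumeration over the two affine parts and the $182$ parallelisms is exhaustive, a complete table of sixteen distinct canonical forms proves that none of the sixteen types collapse.

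The delicate point is precisely this lower bound: the bijection with parallelism classes only governs isomorphisms fixing $[\infty]$, so one really must test for arbitrary isomorphisms, and a priori two closures built from inequivalent parallelisms — or even from different affine parts — could become isomorphic once the block at infinity is allowed to move. The reassuring feature is scale: with $65$ points and blocks of size $5$ the designs lie comfortably within reach of canonical-labelling tools, so the main obstacle is not feasibility but rigour of the enumeration — ensuring that the two affine types and the $182$ parallelisms are genuinely complete (as the cited classifications guarantee) and that the invariants employed are fine enough to distinguish all sixteen types.
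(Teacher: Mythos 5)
Your proposal is correct and follows essentially the same route as the paper, which proves this theorem purely ``by exhaustive computer search'': the surrounding text already reduces the problem to the two affine isomorphism types, the $182$ parallelisms, and the $7+9=16$ orbits of Table~\ref{paraorbits} (handling isomorphisms that fix $[\infty]$ via \cite[Proposition 3.12]{diss}), and the GAP computation then certifies that no isomorphisms moving $[\infty]$ collapse any of the sixteen closures. Your identification of that last step as the genuinely computational content -- and your observation that automorphism group orders alone do not separate all sixteen, so a full design-isomorphism test is needed -- matches what the paper's search must do.
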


Recall that the stabilizer of the block at infinity in any $\SL(2,q)$-unital $\mathbb U_{S,\mathcal D}^\pi$ equals the group of those automorphisms of the affine unital $\mathbb U_{S,\mathcal D}$ which stabilize the parallelism~$\pi$. We compute with GAP that in each Leonids unital there are indeed no automorphisms moving the block at infinity, and we are thus able to compute their full automorphism groups as subgroups of $\Aut(\mathbb U_{C,\mathcal H})$ or $\Aut(\mathbb U_{C,\mathcal E})$, respectively. See Table \ref{paraautos} for the isomorphism types of the full automorphism groups of the Leonids unitals. 

\begin{table}
	\centering
	\captionabove{Isomorphism types of the full automorphism groups of the Leonids unitals}
	\label{paraautos}
	\begin{tabular}{ccc}\toprule
		$\mathbb U$				& {$\Aut(\mathbb U)$} & $\#\Aut(\mathbb U)$ \\\midrule
		$\mathbb U_{C,\mathcal H}^{\pi_2}$& $C_4\ltimes D_5$				& $40$ \\
		$\mathbb U_{C,\mathcal H}^{\pi_4}$& $C_4\ltimes A_4$ 			& $48$ \\
		$\mathbb U_{C,\mathcal H}^{\pi_5}$& $C_4\ltimes (A_4\times C_5)$	& $240$	\\
		$\mathbb U_{C,\mathcal H}^{\pi_6}$& $C_5\rtimes C_4$				& $20$	\\
		$\mathbb U_{C,\mathcal H}^{\pi_7}$& $C_5\rtimes C_4$		& $20$	\\\midrule
		$\mathbb U_{C,\mathcal E}^{\pi_1}$& $D_5$					& $10$	\\
		$\mathbb U_{C,\mathcal E}^{\pi_2}$& $ C_4\ltimes D_5$		& $40$ \\
		$\mathbb U_{C,\mathcal E}^{\pi_3}$& $A_4$ 					& $12$	\\
		$\mathbb U_{C,\mathcal E}^{\pi_4}$& $C_4\ltimes A_4$ 		& $48$ \\
		$\mathbb U_{C,\mathcal E}^{\pi_5}$& $C_4\ltimes A_4$ 		& $48$ \\
		$\mathbb U_{C,\mathcal E}^{\pi_6}$& $C_4$ 					& $4$	\\
		$\mathbb U_{C,\mathcal E}^{\pi_7}$& $C_4$ 					& $4$	\\\bottomrule
	\end{tabular}
\end{table}

\begin{rem}
	Regarding the orders of the full automorphism groups of the Leonids unitals in \emph{Table~\ref{paraautos}}, we see that the order of $\Aut(\mathbb U_{C,\mathcal H}^{\pi_5})$ is notably greater than the other orders. Indeed, although $R$ is not contained in its full automorphism group, the unital $\mathbb U_{C,\mathcal H}^{\pi_5}$ admits a group of automorphisms (of isomorphism type $A_4\times C_5$) which acts regularly on the affine points.
\end{rem}\bigskip

\section{Translations}

We consider a special kind of automorphisms of unitals, namely translations. Since in any closure of an affine $\SL(2,q)$-unital, the respective parallelism determines the possible translations with centers on the block at infinity, the study of translations is closely related to the study of parallelisms.

\begin{defi}\label{deftrans}
	A \textbf{translation} with \textbf{center} $c$ of a unital $\mathbb U$ is an automorphism of~$\mathbb U$ that fixes the point $c$ and each block through $c$. The group of all translations with center~$c$ will be denoted by $G_{[c]}$.
	
	We call $c$ a \textbf{translation center} if $G_{[c]}$ acts transitively on the set of points different from~$c$ on any block through $c$.
\end{defi}

\begin{rem}\label{semireg}
	Let $\mathbb U$ be a unital of order $q$ and $c$ a point of $\mathbb U$. Then the group $G_{[c]}$ of all translations of $\mathbb U$ with center $c$ acts semiregularly on the points of $\mathbb U$ different from $c$ and hence semiregularly on the blocks of $\mathbb U$ that are not incident with $c$ \emph{(}see \emph{\cite[Theorem 1.3]{alltrans}}\emph{)}. Thus, $\#G_{[c]}\leq q$ and $c$ is a translation center exactly if $\#G_{[c]}=q$. Further, if a block $B$ of $\mathbb U$ is fixed by $\Aut(\mathbb U)$, then the center of any translation of $\mathbb U$ lies on $B$.
\end{rem}

In the classical unital, each point is a translation center. In any non-classical $\SL(2,q)$-$\natural$-unital and in any $\SL(2,q)$-$\flat$-unital of order $q\geq 3$, the block $[\infty]$ is fixed by the full automorphism group (see \cite[Proposition 3.11 and Theorem 3.16]{autos}) and hence the center of every translation lies on $[\infty]$.\bigskip

We label the points at infinity of any $\SL(2,q)$-$\pi$-unital with the Sylow $p$-subgroups, such that each (affine short) block $P\in\mathfrak P$ through $\mathds{1}$ is incident with the point $P\in[\infty]$. We first compute translations of $\SL(2,q)$-$\natural$-unitals.

\begin{lem}\label{sylow_trans}
	Let $\mathbb U_{S,\mathcal D}^\natural$ be an $\SL(2,q)$-$\natural$-unital. For each $P\in\mathfrak P$, the point $P\in[\infty]$ is a translation center with $G_{[P]}=R_P\coloneqq\{\rho_x\mid x\in P\}$.
\end{lem}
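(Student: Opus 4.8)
The plan is to verify the inclusion $R_P\subseteq G_{[P]}$ by a direct computation and then to upgrade it to equality by counting, which will simultaneously establish that $P$ is a translation center. First I would pin down the blocks incident with the infinite point $P$. By the labelling convention and the construction of the $\natural$-closure, the point $P\in[\infty]$ is exactly the new point attached to the parallel class of $\natural$ consisting of the left cosets $gP$, $g\in\SL(2,q)$. Hence the blocks through $P$ are the block $[\infty]$ together with all left cosets $gP$ with $g\in\SL(2,q)$, and nothing else.

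Next I would check that every $\rho_x$ with $x\in P$ is a translation with center $P$. Recall that $R$ stabilises $\natural$ and that, as noted in the text, $\rho_h$ acts on the infinite points by conjugation $T\mapsto T^h$ (since $gTh=gh\,T^h$); thus $\rho_x$ extends to an automorphism of the closure fixing the block $[\infty]$. Since $x\in P$ we have $P^x=P$, so $\rho_x$ fixes the point $P$. It remains to see that $\rho_x$ fixes each block through $P$: the block $[\infty]$ is fixed, and for every left coset $gP$ we compute $\rho_x(gP)=gPx=gP$, because $x\in P$ gives $Px=P$. Therefore $\rho_x$ fixes $P$ and every block through it, i.e.\ $\rho_x\in G_{[P]}$, which yields $R_P\subseteq G_{[P]}$.

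Finally I would close the gap by counting. The map $x\mapsto\rho_x$ is an injective homomorphism, so $\#R_P=\#P=q$. On the other hand, $\mathbb U_{S,\mathcal D}^\natural$ is a unital of order $q$, so Remark \ref{semireg} gives $\#G_{[P]}\leq q$. Combining this with $R_P\subseteq G_{[P]}$ forces $G_{[P]}=R_P$ and $\#G_{[P]}=q$, whence $P$ is a translation center, again by Remark \ref{semireg}. The argument is essentially a matter of assembling facts already established about the action of $R$ on $\natural$; the only points requiring care are the identification of the blocks through $P$ and the two one-line computations $P^x=P$ and $gPx=gP$ for $x\in P$, so I do not expect a genuine obstacle here.
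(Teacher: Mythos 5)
Your argument is correct and is essentially the paper's own proof: identify the blocks through the infinite point $P$ as $\{[\infty]\}\cup\{gP\mid g\in\SL(2,q)\}$, observe that $R_P$ fixes all of them, and conclude by the counting bound $\#G_{[P]}\leq q$ from Remark \ref{semireg}. You have merely spelled out the computations that the paper labels as obvious.
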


\begin{myproof} 
	For each $P\in\mathfrak P$, the set of blocks through the point $P$ in $\mathbb U_{S,\mathcal D}^\natural$ equals $\{[\infty]\}\cup\{gP\mid g\in\SL(2,q)\}$. Hence, $R_P$ obviously is a group of translations of $\mathbb U_{S,\mathcal D}^\natural$ with center $P$. Since $\# R_P=q$, the statement follows (recall Remark \ref{semireg}).
\end{myproof}

If $\mathbb U_{S,\mathcal D}^\natural$ is not classical, then the translations given in Lemma \ref{sylow_trans} are all translations of~$\mathbb U_{S,\mathcal D}^\natural$.
We need some preparation to compute all possible translations of $\SL(2,q)$-$\pi$-unitals with center on $[\infty]$ and $\pi\in\{\flat,\pi_{\mathsf{odd}},\pi_{\mathsf{sq}},\inv{\pi_{\mathsf{sq}}}\}$ (Theorem \ref{translationen}).

\begin{lem} \label{transalpharhot}
	Let $P\in\mathfrak P$ and let $\tau\in\mathfrak A\ltimes R$ such that $P\cdot\tau = P$. Then $\tau=\alpha\rho_x$ with $\alpha\in\mathfrak A_P$ and $x\in P$.
\end{lem}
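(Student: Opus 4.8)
The plan is to decompose the given $\tau\in\mathfrak A\ltimes R$ as $\tau=\alpha\rho_g$ with $\alpha\in\mathfrak A$ and $g\in\SL(2,q)$, and then to extract information from the hypothesis $P\cdot\tau=P$, where $P\in\mathfrak P$ is viewed as an affine short block (a Sylow $p$-subgroup containing $\mathds 1$). The key point is to track separately where the automorphism part $\alpha$ and the right-multiplication part $\rho_g$ send $P$. Since $\alpha$ is induced by an automorphism of $\SL(2,q)$, it permutes the Sylow $p$-subgroups, so $P\cdot\alpha=:Q$ is again a Sylow $p$-subgroup passing through $\mathds 1$. Applying $\rho_g$ afterwards gives $P\cdot\tau = Q\cdot\rho_g = Qg$, a right coset of $Q$.

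First I would use that $P\cdot\tau=P$ forces $Qg=P$. Now $P$ is a subgroup, hence contains $\mathds 1$, so the right coset $Qg$ contains $\mathds 1$; but a right coset $Qg$ contains $\mathds 1$ exactly when $g\in Q$, and in that case $Qg=Q$. Therefore $Q=P$ and $g\in Q=P$, so setting $x\coloneqq g$ we immediately get $x\in P$ as required. It remains only to upgrade the conclusion $P\cdot\alpha=Q=P$ to the statement $\alpha\in\mathfrak A_P$: since $\alpha$ stabilizes $P$ as a set, it lies in the stabilizer $\mathfrak A_P$ of $P$ in $\mathfrak A$ by definition, which is exactly what is claimed.

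The one step deserving care is the bookkeeping of the semidirect product $\mathfrak A\ltimes R$: I must be sure the factorization $\tau=\alpha\rho_g$ is legitimate and that the composite action on the block $P$ is computed in the correct order, i.\,e.\ that applying $\tau$ means first applying $\alpha$ (a genuine set-map on $\mathcal P=\SL(2,q)$ fixing $\mathds 1$, sending subgroups to subgroups) and then $\rho_g$. Because every element of $\mathfrak A$ fixes $\mathds 1$ and sends Sylow $p$-subgroups through $\mathds 1$ to Sylow $p$-subgroups through $\mathds 1$, while $\rho_g$ merely translates, the argument that $\mathds 1\in Qg$ forces $g\in Q$ is the whole content, and it is elementary group theory. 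I expect no real obstacle: the main (minor) subtlety is simply confirming that distinct Sylow $p$-subgroups give distinct blocks through $\mathds 1$, so that $Qg=P$ with $\mathds 1$ on both sides genuinely pins down both $Q=P$ and $g\in P$, rather than leaving ambiguity.
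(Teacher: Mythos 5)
Your argument is correct and is essentially the paper's own proof: the paper likewise writes $P=P\cdot\tau=(P\cdot\alpha)h$ and concludes $\alpha\in\mathfrak A_P$ and $h\in P$, with your observation that the coset $(P\cdot\alpha)h$ contains $\mathds 1$ being exactly the step the paper leaves implicit. No issues.
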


\begin{myproof}
	Let $\tau=\alpha\cdot\rho_h\in\mathfrak A\ltimes R$. Then
	$P = P\cdot\tau = (P\cdot\alpha)h$
	and thus $\alpha$ stabilizes $P$ and we have $h\in P$.
\end{myproof}

Let in the following $T\coloneqq \{\mat 1x01 \mid x\in\mathbb F_q \}$ be a fixed Sylow $p$-subgroup of $\SL(2,q)$.

\begin{lem} \label{transt}
	Let $M\coloneqq \{ Tg \mid g\in\N(T) \}$ and $f\coloneqq \mat 10c1$ with $c\in\mathbb F_q^\times$. 
	\begin{enumerate}[\em (a)]
		\item Let $\tau\in \mathfrak A\ltimes R$ such that $X\cdot\tau = X$ for all $X\in M$. Then $\tau =\gamma_a\rho_t$ for some $a\in\N_{\GL(2,q)}(T)$ and $t\in T$.
		\item Let $\tau\in \mathfrak A\ltimes R$ such that $X\cdot\tau = X$ for all $X\in M\cup \{Tf\}$. Then $\tau =\gamma_{\inv t}\rho_t$ for some $t\in T$.
	\end{enumerate}	
\end{lem}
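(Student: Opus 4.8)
The plan is to determine $\tau$ first from the single block $T = T\mathds{1}\in M$ and then to use the remaining cosets to remove all leftover freedom. Throughout I use the elementary fact that a right coset $Tg$ is determined by the bottom row of $g$: left multiplication by an element of $T$ only adds a multiple of the bottom row to the top row, so $Tg = Tg'$ holds exactly when $g$ and $g'$ share a bottom row. In these terms $M = \{T\mat{\lambda}{0}{0}{\inv\lambda}\mid \lambda\in\mathbb F_q^\times\}$ is the family of $q-1$ right cosets with diagonal representatives, the coset $T\mat{\lambda}{0}{0}{\inv\lambda}$ being recorded by its bottom row $\zv{0}{\inv\lambda}$; in particular $T = T\mathds{1}\in M$.

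For (a), since $\tau$ fixes $T$, Lemma \ref{transalpharhot} applied with $P = T$ gives $\tau = \alpha\rho_x$ with $\alpha\in\mathfrak A_T$ and $x\in T$. Because $\mathfrak A = \PGammaL(2,q)$ and the Frobenius $\varphi$ fixes $T$, I may write $\alpha = \gamma_a\varphi^i$ with $a\in\N_{\GL(2,q)}(T)$ upper triangular; it then remains to show $\varphi^i = \mathrm{id}$. I would feed each diagonal coset into $\tau$: conjugation by an upper triangular matrix preserves the diagonal of a diagonal matrix, and $\varphi^i$ raises its entries to the $p^i$-th power, so $\alpha$ sends $\mat{\lambda}{0}{0}{\inv\lambda}$ to an upper triangular matrix with bottom row $\zv{0}{\lambda^{-p^i}}$; the subsequent $\rho_x$ (with $x\in T$) leaves the bottom row unchanged. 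Hence $T\mat{\lambda}{0}{0}{\inv\lambda}\cdot\tau$ is the coset with bottom row $\zv{0}{\lambda^{-p^i}}$, and requiring it to equal $\zv{0}{\inv\lambda}$ for every $\lambda\in\mathbb F_q^\times$ forces $\lambda^{p^i} = \lambda$ throughout, i.e.\ $\varphi^i = \mathrm{id}$. Thus $\alpha = \gamma_a$ and $\tau = \gamma_a\rho_t$ with $t := x\in T$, proving (a).

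For (b), I start from the form $\tau = \gamma_a\rho_t$ just obtained and write $a = \mat{s}{r}{0}{u}$ (with $su\neq0$) and $t = \mat{1}{w}{0}{1}$. The extra hypothesis $Tf\cdot\tau = Tf$ with $f = \mat{1}{0}{c}{1}$ becomes, via the bottom-row criterion, the requirement that $\inv a f a\,t$ and $f$ share a bottom row, the latter being $\zv{c}{1}$. A short computation gives the bottom row of $\inv a f a$ as $\zv{cs/u}{1+cr/u}$, and right multiplication by $t$ turns it into $\zv{cs/u}{(cs/u)w+1+cr/u}$. Comparing with $\zv{c}{1}$ and using $c\neq0$ yields $s = u$ and then $r = -ws$, so that $a = s\mat{1}{-w}{0}{1} = s\inv t$. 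Since scalar matrices act trivially under conjugation, $\gamma_a = \gamma_{\inv t}$, and therefore $\tau = \gamma_{\inv t}\rho_t$, as claimed.

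The crux is the Frobenius-elimination step in (a). The point is to isolate the correct invariant of a right coset $Tg$ -- namely its bottom row -- and to verify that under $\alpha = \gamma_a\varphi^i$ it transforms by $\inv\lambda\mapsto\lambda^{-p^i}$ regardless of the top-row data and of the trailing right translation $\rho_x$; only with this in place does the global condition that $\tau$ fix every member of $M$ collapse to the pointwise identity $\lambda^{p^i} = \lambda$ that kills the field automorphism. Once the bottom-row criterion is available, part (b) is a routine $2\times2$ computation.
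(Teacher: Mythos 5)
Your proof is correct and follows essentially the same route as the paper: reduce to $\tau=\alpha\rho_t$ with $\alpha\in\mathfrak A_T$ via Lemma \ref{transalpharhot}, use the diagonal cosets in $M$ to eliminate the Frobenius part (the paper phrases this as the condition $b^{p^l}=b$ for all $b$, which is your $\lambda^{p^i}=\lambda$), and then use the single extra coset $Tf$ to force $a$ to be a scalar multiple of $\inv t$. Your bottom-row criterion is just a tidy repackaging of the coset-equality computations the paper carries out directly, and your final $2\times2$ computation in (b) matches the paper's after normalizing $a$.
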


\begin{myproof}
	\begin{enumerate}[(a)]
		\item Since $T\in M$, we have $\tau =\alpha\rho_t$ with $\alpha\in\mathfrak A_T\cong \mathrm P(\N_{\GL(2,q)}(T))\rtimes \Aut(\mathbb F_q)$ and $t\in T$, according to Lemma \ref{transalpharhot}. Note first that right multiplication with any element $t\in T$ fixes each coset $Tg=gT \in M$. If $g=\mat b*0{\inv b} \in \N(T)$, the coset $Tg$ equals $\{ \mat bx0{\inv b}\mid x\in\mathbb F_q\}$. Let $[a]\in \mathrm P(\N_{\GL(2,q)}(T))$. Then we may choose $a=\mat 1y0d$ and for each $Tg$ with $g\in\N(T)$, we have $(Tg)\cdot \gamma_a = (Tg)^a = Tg$. Applying a power~$\varphi^l$ of the Frobenius automorphism $\varphi$ stabilizes the block $Tg = \{\mat bx0{\inv b}\mid x\in \mathbb F_q\}$ exactly if $b^{(p^l)}=b$. Thus, if $\tau=\alpha\rho_t$ stabilizes each block in $M$, then $\alpha=\gamma_a$ with $a\in\N_{\GL(2,q)}(T)$.
		
		\item We already know $\tau=\gamma_a\rho_t$ for some $a\in\N_{\GL(2,q)}(T)$ and $t\in T$. We need to show $\gamma_a=\gamma_{\inv t}$. Since $a$ normalizes $T$, we have $(Tf)\cdot\gamma_a\rho_t = (Tf)^at = Tf^at$. Hence, $(Tf)\cdot\tau = Tf$ exactly if $f^at\inv f \in T$. Let $t\coloneqq \mat 1x01$ and choose without restriction $a=\mat 1y0d$. Then
		\[ \zv 01 \cdot f^at\inv f = \zv{c (\frac{1-c(x+y)}{d} -1)}{1+\frac{c(x+y)}{d}}\text{.}\]
		Since $f^at\inv f$ is in $T$ and $c\neq 0$, we get $y=-x$ and $d=1$ and hence $a=\inv t$.\qedhere
	\end{enumerate}
\end{myproof}

\goodbreak

If $\pi$ is a parallelism on $\mathfrak S$ and $A\in\pi$ a parallel class, we denote
\[\mathcal T_{A}^\pi \coloneqq \{\tau\in\mathfrak A\ltimes R \mid \forall X\in A:X\cdot\tau=X \} \text{.} \]
If $B\in\mathfrak S$ is a short block, we denote by $[B]$ the parallel class of $\pi$ containing $B$.

\goodbreak

\begin{lem} \label{transp} For any $P\in\mathfrak P$, we have: 
	\begin{enumerate}[\em (a)]
		\item $\mathcal T_{[P]}^\flat = \{\gamma_{\inv x}\rho_x \mid x\in P\}$.
		\item $\mathcal T_{[P]}^\pi$ is trivial for $\pi\in\{\pi_{\mathsf{odd}},\pi_{\mathsf{sq}},\inv{\pi_{\mathsf{sq}}}\}$. \label{pi}
	\end{enumerate}
\end{lem}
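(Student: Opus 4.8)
The plan is to reduce each case to the single parallel class containing the block $T$ and then apply Lemma \ref{transt}. For this, note that for every $\pi\in\{\flat,\pi_{\mathsf{odd}},\pi_{\mathsf{sq}},\inv{\pi_{\mathsf{sq}}}\}$ there is a subgroup of $\mathrm{Stab}(\pi)$ permuting the parallel classes of $\pi$ transitively: the inner conjugations $\gamma_h$ ($h\in\SL(2,q)$) for $\flat$ and $\pi_{\mathsf{odd}}$, which satisfy $[T]^\flat\cdot\gamma_h=[T^h]^\flat$ and $A^h\cdot\gamma_{h'}=A^{hh'}$ and lie in $\mathrm{Stab}(\pi)$ (for $\pi_{\mathsf{odd}}$ by Theorem \ref{stabrpa}, since $\gamma_h\in\PSL(2,q)$); and the maps $\vartheta_h$ for $\pi_{\mathsf{sq}}$, which satisfy $\vartheta_h\vartheta_{h'}=\vartheta_{hh'}$, hence $A\vartheta_h\cdot\vartheta_{h'}=A\vartheta_{hh'}$, and stabilize $\pi_{\mathsf{sq}}$ by construction. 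For $\inv{\pi_{\mathsf{sq}}}$ the same $\vartheta_h$ work, since inversion conjugates the $\vartheta$-action to itself and $\mathrm{Stab}(\inv{\pi_{\mathsf{sq}}})=\mathrm{Stab}(\pi_{\mathsf{sq}})$. Consequently all sets $\mathcal{T}^\pi_{C}$ with $C$ a class are conjugate in $\mathfrak A\ltimes R$; as $[P]^\pi$ is always such a class, it suffices to treat the class containing the block $T$, which is $A$ (resp. $\inv A$ for $\inv{\pi_{\mathsf{sq}}}$) because $\mathds{1}\in\Omega$. In the square-order cases we use that Lemmas \ref{transalpharhot} and \ref{transt} hold verbatim over $\mathbb{F}_{q^2}$, their proofs being field-agnostic.

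For part (a), the class $[T]^\flat$ is the set of all right cosets of $T$; it contains $M=\{Tg\mid g\in\N(T)\}$ as well as $Tf$ with $f=\mat 10c1$. Hence any $\tau\in\mathcal{T}^\flat_{[T]}$ fixes $M\cup\{Tf\}$, and Lemma \ref{transt}(b) gives $\tau=\gamma_{\inv t}\rho_t$ for some $t\in T$. Conversely $\gamma_{\inv t}\rho_t$ acts as the left translation $g\mapsto tg$, which fixes every right coset $Tg$ since $tT=T$; thus $\mathcal{T}^\flat_{[T]}=\{\gamma_{\inv t}\rho_t\mid t\in T\}$. Conjugating by $\gamma_h$ with $T^h=P$ and using $\gamma_{\inv h}\gamma_{\inv t}\rho_t\gamma_h=\gamma_{\inv{(t^h)}}\rho_{t^h}$, with $t^h$ running through $P$, yields $\mathcal{T}^\flat_{[P]}=\{\gamma_{\inv x}\rho_x\mid x\in P\}$.

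For part (b), in each remaining case the class containing $T$ again contains $M$ — note $\N(T)\subseteq\Omega$, and for $\inv{\pi_{\mathsf{sq}}}$ the cosets $Tg=gT$ ($g\in\N(T)$) sit in $\inv A$ as left cosets — together with a right coset $Tf$, $f=\mat 10c1$: one chooses $c$ a nonzero square for $\pi_{\mathsf{odd}}$, $c\in\mathbb{F}_q^\times$ for $\pi_{\mathsf{sq}}$, and $c\in\mathbb{F}_{q^2}^\times\smallsetminus\mathbb{F}_q$ for $\inv{\pi_{\mathsf{sq}}}$, ensuring $Tf$ lies in the class. Lemma \ref{transt}(b) then forces $\tau=\gamma_{\inv t}\rho_t$, i.e. left translation $g\mapsto tg$ by some $t\in T$. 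The decisive difference from $\flat$ is that these classes also contain left cosets $gT$ by construction (with $g\in\Omega^\mathsf{c}$ for $A$, resp. $g\in\Omega$ for $\inv A$); such $gT$ is fixed by $g\mapsto tg$ only if $t\in gT\inv g=T^{\inv g}$, which for $t\neq\mathds{1}$ forces $g\in\N(T)$, since distinct Sylow $p$-subgroups meet trivially. Picking a witness $g\notin\N(T)$ — any $g\in\Omega^\mathsf{c}$ for $\pi_{\mathsf{odd}},\pi_{\mathsf{sq}}$ (as $\N(T)\subseteq\Omega$), and $g=\mat 10c1$ with $c\in\mathbb{F}_q^\times$ for $\inv{\pi_{\mathsf{sq}}}$ — rules out $t\neq\mathds{1}$, so $\tau=\mathds{1}$ and $\mathcal{T}^\pi_{[T]}$ is trivial.

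The main obstacle will be the bookkeeping of which cosets lie in the relevant class, especially for $\inv{\pi_{\mathsf{sq}}}$, where inversion swaps right and left cosets and interchanges $\Omega$ with $\Omega^\mathsf{c}$: one must verify that $M$ persists in $\inv A$ (now realized through left cosets) and identify a non-normalizing left-coset witness of the correct type. The reduction step likewise hinges on confirming that the chosen symmetries genuinely act transitively on the classes while lying in the stabilizer; once this and the $\mathbb{F}_{q^2}$-versions of Lemmas \ref{transalpharhot} and \ref{transt} are in place, the remaining computations are the short, explicit ones indicated above.
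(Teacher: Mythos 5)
Your proposal is correct and follows essentially the same route as the paper: apply Lemma \ref{transt} to the class containing $T$ (which contains $M$ and a suitable $Tf$), observe that the resulting left translations fix all right cosets of $T$ but fix a left coset $gT$ only when $g\in\N(T)$ or $t=\mathds{1}$, and transfer to the other classes via the transitive action of the stabilizer of $\pi$. You are merely more explicit than the paper about the choice of $c$ placing $Tf$ in the right class, the bookkeeping for $\inv{\pi_{\mathsf{sq}}}$, and the fact that Lemma \ref{transt} applies verbatim over $\mathbb F_{q^2}$.
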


\begin{myproof}
	Let $\pi\in\{\flat,\pi_{\mathsf{odd}},\pi_{\mathsf{sq}},\inv{\pi_{\mathsf{sq}}}\}$. Following the notation in Lemma \ref{transt}, the parallel class $[T]\in\pi$ contains $M$ and at least one coset $Tf$ with $f=\mat 10c1$ and $c\in\mathbb F_q^\times$. According to Lemma \ref{transt}, we thus get $\mathcal T_{[T]}^\pi \subseteq \{\gamma_{\inv t}\rho_t \mid t\in T\}$. Note that $\gamma_{\inv t}\rho_t$ equals left multiplication with $t$ and that $tgT=gT$ exactly if $g\in\N(T)$ or $t=\mathds{1}$. The parallel class $[T]\in\flat$ equals the set of right cosets of $T$, while in $\pi_{\mathsf{odd}}$, $\pi_{\mathsf{sq}}$, and $\inv{\pi_{\mathsf{sq}}}$, the parallel class $[T]$ contains at least one left coset $gT$, respectively, with $g\notin\N(T)$. Hence, we get $\mathcal T_{[T]}^\flat = \{\gamma_{\inv t}\rho_t \mid t\in T\}$ and $\mathcal T_{[T]}^\pi$ is trivial if $\pi\in\{\pi_{\mathsf{odd}},\pi_{\mathsf{sq}},\inv{\pi_{\mathsf{sq}}}\}$.
	
	For any $\pi\in\{\flat,\pi_{\mathsf{odd}},\pi_{\mathsf{sq}},\inv{\pi_{\mathsf{sq}}}\}$, the stabilizer of $\pi$ in $\mathfrak A\ltimes R$ acts transitively on the parallel classes of $\pi$. Hence, statement (\ref{pi}) follows. If $\pi=\flat$, then each parallel class $[T^h]$ equals $[T]^h$ and we have $(Tg)^h\cdot\tau = (Tg)^h$ exactly if $\tau=\gamma_{t^{-h}}\rho_{t^h}$ for some $t\in T$.
\end{myproof}

\begin{thm}Let $\mathbb U\coloneqq\mathbb U_{S,\mathcal D}^\pi$ be an $\SL(2,q)$-$\pi$-unital. \label{translationen}
	\begin{enumerate}[\em (a)]
		\item If $\pi=\flat$ and $q\geq 3$, then:
			\begin{enumerate}[\em (i)]
				\item If $p=2$, then every non-trivial translation of $\mathbb U$ is given by left multiplication with an involution contained in $\N(S)$. For each Sylow $2$-subgroup $P\in\mathfrak P$, the normalizer $\N(S)$ contains exactly one non-trivial element of $P$.
				\item If $q$ is odd, then $\mathbb U$ does not admit any non-trivial translation.
			\end{enumerate}
		\item If $\pi\in\{\pi_{\mathsf{odd}},\pi_{\mathsf{sq}},\inv{\pi_{\mathsf{sq}}}\}$, then $\mathbb U$ admits no non-trivial translation with center on $[\infty]$. \label{pitr}
	\end{enumerate}
\end{thm}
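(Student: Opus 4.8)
The plan is to run both parts through the machinery of Lemmas \ref{transalpharhot}--\ref{transp}. I would start in every case from an arbitrary translation $\tau$ with centre $P\in[\infty]$, $P\in\mathfrak P$. Since $[\infty]^\pi$ is one of the blocks through $P$, the automorphism $\tau$ fixes $[\infty]^\pi$ and therefore restricts to an automorphism of the affine unital $\mathbb U_{S,\mathcal D}$; as $q\geq 3$, this restriction lies in $\mathfrak A\ltimes R$ by \cite[Theorem 3.3]{autos}. Fixing all blocks through $P$ in particular fixes every short block of the parallel class $[P]$, so the restriction lies in $\mathcal T_{[P]}^\pi$. Conversely, an element of $\mathfrak A\ltimes R$ that fixes all of $[P]$ fixes the parallel class and hence the infinite point $P$, and if it is the identity on the affine part it is the identity on the whole closure. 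Thus the translations with centre $P$ are exactly the elements of $\mathcal T_{[P]}^\pi$ that happen to be automorphisms of $\mathbb U_{S,\mathcal D}$. Part (\ref{pitr}) now follows at once: Lemma \ref{transp}(\ref{pi}) gives $\mathcal T_{[P]}^\pi=\{\mathrm{id}\}$ for $\pi\in\{\pi_{\mathsf{odd}},\pi_{\mathsf{sq}},\inv{\pi_{\mathsf{sq}}}\}$, so every such $\tau$ is trivial.

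For $\pi=\flat$, Lemma \ref{transp}(a) identifies $\mathcal T_{[P]}^\flat$ with the left multiplications $\gamma_{\inv x}\rho_x$, $x\in P$, and the next step is to decide which of these preserve the full block structure. Since $\rho_x\in R\subseteq\Aut(\mathbb U_{S,\mathcal D})$, the product $\gamma_{\inv x}\rho_x$ is an automorphism if and only if $\gamma_{\inv x}$ is. Because $\gamma_{\inv x}\in\mathfrak A$ while $\Aut(\mathbb U_{S,\mathcal D})\leq\mathfrak A_S\ltimes R$ with $\mathfrak A\cap R=\{\mathrm{id}\}$, membership forces $\gamma_{\inv x}\in\mathfrak A_S$, i.e.\ $\gamma_{\inv x}$ stabilises $S$, i.e.\ $x\in\N(S)$. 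Hence every translation with centre $P$ is left multiplication by some $x\in P\cap\N(S)$.

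It remains to analyse $P\cap\N(S)$ from the orders $\#P=q$ and $\#\N(S)=2(q+1)$ (the normaliser of the anisotropic torus $S$). If $q$ is odd, then $\gcd(q,2(q+1))=1$, so $P\cap\N(S)=\{\mathds 1\}$ and only the trivial translation remains, which is statement (ii). If $p=2$, then $\gcd(q,2(q+1))=2$, so $P\cap\N(S)$ has order $1$ or $2$; as $P$ is elementary abelian, any non-trivial element of it is an involution, which already gives the shape asserted in (i). To obtain the exact count I would argue that each $P$ contains precisely one such involution: here $S$ is cyclic of odd order $q+1$ and $\N(S)$ is dihedral of order $2(q+1)$, so its $q+1$ involutions are exactly the elements of $\N(S)\smallsetminus S$; by the trivial-intersection property of the Sylow $2$-subgroups (noted in the preliminaries) each involution of $\SL(2,q)$ lies in a unique $P\in\mathfrak P$, and two distinct involutions of $\N(S)$ cannot share a $P$, since their product would be a non-trivial $2$-element lying in the odd-order group $S$. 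The $q+1$ involutions are therefore distributed injectively, hence bijectively, among the $q+1$ Sylow $2$-subgroups, so each $P$ meets $\N(S)$ in exactly one non-trivial (involutory) element.

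The only genuinely delicate step is this last count for $p=2$: upgrading the arithmetic bound $\#(P\cap\N(S))\mid 2$ to the exact value $2$ needs both the trivial-intersection property of the Sylow $2$-subgroups and the dihedral structure of $\N(S)$, whereas everything before it is a bookkeeping reduction through the lemmas. I would also point out explicitly that the theorem only asserts that every non-trivial translation has this form; whether the single candidate $\gamma_{\inv\sigma}\rho_\sigma$ is genuinely an automorphism depends on its action on the $\mathcal D$-cosets and is a separate, construction-dependent question not settled by this argument.
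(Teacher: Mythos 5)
Your overall route is the paper's: restrict a translation with centre $P\in[\infty]$ to the affine unital, place it in $\mathfrak A_S\ltimes R$ via \cite[Theorem 3.3]{autos}, note that it must fix every short block of the class $[P]$ and so lies in $\mathcal T_{[P]}^\pi$, then apply Lemma \ref{transp}. This correctly settles part (b) and reduces the $\flat$-case to left multiplications by elements of $P\cap\N(S)$. However, part (a) claims something about \emph{every} non-trivial translation of $\mathbb U$, with no restriction on the centre, and you only ever treat centres on $[\infty]$. You must first rule out affine centres; the paper does this by quoting that for $\pi=\flat$ and $q\geq 3$ the block $[\infty]$ is fixed by the full automorphism group (\cite[Theorem 3.16]{autos}), whence by Remark \ref{semireg} every translation centre lies on $[\infty]$. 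Without that step your argument proves only the analogue of part (b) for $\flat$.

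The second gap is your appeal to $\#\N(S)=2(q+1)$ ``for the anisotropic torus $S$''. In this construction $S$ is an arbitrary subgroup of order $q+1$ admitting a suitable $\mathcal D$; for odd $q$ it need not be cyclic (quaternion and other isomorphism types occur among the admissible candidates), and then $\N(S)$ need not have order $2(q+1)$ --- for instance the normalizer of a quaternion group of order $8$ in $\SL(2,7)$ has order $48$. The conclusion you need, namely that $\N(S)$ (indeed the stabilizer $\mathfrak A_S$) contains no non-trivial $p$-element when $q$ is odd, is still true, but it has to be read off from the classification of admissible $S$ and their stabilizers, which is exactly what the paper cites (\cite[Proposition 2.5 and Theorem 2.11]{diss}); a bare $\gcd$ computation with $2(q+1)$ does not cover all cases. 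For $p=2$ your argument is sound, since $q+1$ odd forces $S$ cyclic and $\N(S)$ dihedral of order $2(q+1)$, and your count of one involution per Sylow $2$-subgroup agrees with the paper's (both hinge on $S\cap P=\{\mathds 1\}$ and the fact that the $q+1$ involutions form a single coset of $S$). Your closing caveat --- that the theorem gives only a necessary form for translations and that actual membership in $\Aut(\mathbb U_{S,\mathcal D})$ depends on the $\mathcal D$-blocks --- is correct and consistent with how the statement is phrased.
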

	
\begin{myproof}
	The translations of $\mathbb U$ with center on the block $[\infty]$ are automorphisms of $\mathbb U$ stabilizing $[\infty]$ and are hence contained in $\mathfrak A_S\ltimes R$. Thus, statement (\ref{pitr}) follows directly from Lemma \ref{transp}.
	
	Now let $\pi=\flat$ and $q\geq 3$. Then the block $[\infty]$ is fixed by the full automorphism group of $\mathbb U$ (see \cite[Theorem 3.16]{autos}) and hence the center of every translation of $\mathbb U$ lies on $[\infty]$ (recall Remark \ref{semireg}). Fix $P\in\mathfrak P$. According to Lemma \ref{transp}, we have
	\[\Gamma_{[P]} \leq \mathcal T_{[P]}^\flat \cap (\mathfrak A_S\ltimes R) = \{\gamma_{\inv x}\rho_x\mid x\in P \text{ such that } \gamma_x \in \mathfrak A_S\} \text{.}\]
	Thus, every possible translation of $\mathbb U$ with center $P$ is given by left multiplication with  $x\in P\cap \N(S)$. The possible isomorphism types of $S$ and its stabilizer in $\Aut(\SL(2,q))$ are given in \cite[Proposition 2.5 and Theorem 2.11]{diss}. According to those, we get that $x=\mathds{1}$ or $\ord(x)=p$ divides $2(q+1)$ and hence $p=2$.
	
	If $p=2$, then $S$ is cyclic (see e.\,g. \cite[Proposition 2.2]{threeaff} or \cite[Proposition 2.5]{diss}) and the normalizer $\N(S)$ is a dihedral group of order $2(q+1)$ containing $S$ as normal subgroup of order $q+1$. There are thus $q+1$ involutions in $\N(S)$, contained in one coset of $S$. For any $P\in\mathfrak P$, the intersection of $S$ and $P$ is trivial and hence  no two non-trivial elements of $P$ are contained in $\N(S)$.
\end{myproof}

\begin{cor}
	For $q$ even, the
	unital $\mathbb U_{C,\mathcal H}^\flat$ admits exactly $q+1$ non-trivial translations, each of order $2$. These translations generate a dihedral group of order $2(q+1)$.\qed
\end{cor}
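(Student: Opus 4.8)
The plan is to read the non-trivial translations off Theorem \ref{translationen}(a)(i) and then identify the group they generate using the dihedral structure of $\N(C)$. Since $q$ is even, that theorem tells us that every non-trivial translation of $\mathbb U\coloneqq\mathbb U_{C,\mathcal H}^\flat$ has its center on $[\infty]$ and equals the left multiplication $L_x\coloneqq\gamma_{\inv x}\rho_x\colon g\mapsto xg$ for some involution $x\in\N(C)$. Thus the corollary splits into three tasks: realize each such candidate as a genuine translation (this is the only nontrivial direction, since the theorem provides only an upper bound), count the involutions of $\N(C)$, and determine the group $\langle L_x\rangle$.

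First I would establish realization. Let $x\in\N(C)$ be an involution and let $P_x\in\mathfrak P$ be the unique Sylow $2$-subgroup containing it. Because $x\in\N(C)$, conjugation $\gamma_{\inv x}$ fixes $C$, so $\gamma_{\inv x}\in\mathfrak A_C$; using that the full automorphism group of the classical affine unital is $\mathfrak A_C\ltimes R$, we get $L_x\in\Aut(\mathbb U_{C,\mathcal H})$, and since all of $\mathfrak A\ltimes R$ stabilizes $\flat$, the map $L_x$ induces an automorphism of the closure $\mathbb U$. That $L_x$ is a translation with center the infinity point $P_x$ is then a one-line coset identity: since $x\in P_x$, we have $L_x(P_xg)=xP_xg=P_xg$ for every $g\in\SL(2,q)$, so $L_x$ fixes each short block of the parallel class $[P_x]$; consequently it fixes the infinity point $P_x$, the block $[\infty]$, and every block of $\mathbb U$ through $P_x$. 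Since $y\mapsto L_y$ is injective and each $x$ is an involution (so $L_x^2=L_{x^2}=\mathrm{id}$), distinct involutions give distinct non-trivial translations of order $2$.

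It remains to count and to generate. As recorded in the proof of Theorem \ref{translationen}, $\N(C)$ is dihedral of order $2(q+1)$ with cyclic rotation subgroup $C$ of odd order $q+1$; such a group has no involution among its rotations, so its involutions are exactly the $q+1$ reflections. Combined with the realization step this yields exactly $q+1$ non-trivial translations. For the generated group I would note that the reflections generate the whole dihedral group (the product of a fixed reflection with each of the others runs through all non-trivial rotations), so $\langle x\mid x\text{ an involution of }\N(C)\rangle=\N(C)$; transporting this along the injective homomorphism $y\mapsto L_y$ (note $L_yL_{y'}=L_{yy'}$) shows that the $q+1$ translations generate a group isomorphic to $\N(C)$, i.e.\ a dihedral group of order $2(q+1)$.

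I expect the realization step to be the main obstacle, because Theorem \ref{translationen}(a)(i) only constrains the translations from above; the substance of the corollary is precisely that every candidate $L_x$ is an honest automorphism of $\mathbb U$. This is where both hypotheses genuinely enter: $x\in\N(C)$ makes $\gamma_{\inv x}$ preserve the blocks through $\mathds 1$ (via $\Aut(\mathbb U_{C,\mathcal H})=\mathfrak A_C\ltimes R$), while $x\in P_x$ makes left multiplication fix the cosets $P_xg$. I would therefore invoke the known full automorphism group of the classical affine unital explicitly, since the coset identities alone do not show that $L_x$ permutes the blocks arising from $\mathcal H$.
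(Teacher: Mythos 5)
Your proof is correct and follows the route the paper intends: the corollary is stated with an immediate \qed because it is meant to be read off from Theorem \ref{translationen}(a)(i) together with the dihedral structure of $\N(C)$, and you elaborate exactly that. You rightly identify and supply the one step the paper leaves implicit, namely that each left multiplication by an involution of $\N(C)$ is actually realized as a translation, which indeed rests on the known fact that $\mathfrak A_C\ltimes R$ acts as a group of automorphisms of the classical affine unital $\mathbb U_{C,\mathcal H}$.
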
\bigskip


For the Leonids unitals, all translations can explicitly be computed (e.\,g. with GAP).

\begin{prop}[\cite{diss}, Proposition 6.17] All translations of the Leonids unitals are known:
	\begin{enumerate}[\em (i)]
		\item $\mathbb U_{C,\mathcal H}^{\pi_2}$ and $\mathbb U_{C,\mathcal E}^{\pi_2}$ admit exactly one non-trivial translation each, of order~$2$.
		\item $\mathbb U_{C,\mathcal H}^{\pi_4}$, $\mathbb U_{C,\mathcal H}^{\pi_5}$, $\mathbb U_{C,\mathcal E}^{\pi_3}$, $\mathbb U_{C,\mathcal E}^{\pi_4}$ and $\mathbb U_{C,\mathcal E}^{\pi_5}$ admit exactly three non-trivial translations each. These translations, respectively, have a common center (which is thus a translation center) and generate an elementary abelian group of order~$4$.\qed
	\end{enumerate}
\end{prop}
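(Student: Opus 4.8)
The plan is to reduce the determination of all translations to a finite search inside the explicitly known automorphism groups collected in Table~\ref{paraautos}. First I would record that, as verified by computer in the paragraph preceding the theorem, the block $[\infty]$ is fixed by the full automorphism group of each of the twelve Leonids unitals. By Remark~\ref{semireg} this forces the center of every translation to lie on $[\infty]$, so it suffices to determine, for each point $P\in[\infty]$, the group $G_{[P]}$ of automorphisms fixing $P$ together with every block through $P$. Remark~\ref{semireg} also bounds $\#G_{[P]}$ by $q=4$, so the only possible orders are $1$, $2$ and $4$, and $P$ is a translation center exactly when $\#G_{[P]}=4$.

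Next I would dispose of the parallelisms $\pi_6$ and $\pi_7$, which according to Section~\ref{leonids} are equivalent to $\inv{\pi_{\mathsf{sq}}}$ and $\pi_{\mathsf{sq}}$: Theorem~\ref{translationen}\,(\ref{pitr}) already shows that the corresponding unitals admit no non-trivial translation with center on $[\infty]$, and hence, by the reduction above, no non-trivial translation at all. This explains why $\pi_6$ and $\pi_7$ are absent from the statement, and leaves the five genuinely new parallelisms $\pi_1,\ldots,\pi_5$ (on the two affine unitals $\mathbb U_{C,\mathcal H}$ and $\mathbb U_{C,\mathcal E}$) to be handled.

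For these I would carry out the search directly. Since $G_{[P]}$ is by definition the pointwise stabilizer of the pencil of blocks through $P$ inside $\Aut(\mathbb U)$, and since $\Aut(\mathbb U)$ is explicitly known (orders between $4$ and $240$) and fixes $[\infty]$, one enumerates its elements and retains those fixing a chosen $P\in[\infty]$ and each short block of the parallel class $[P]$. The enumeration yields: for $\mathbb U_{C,\mathcal H}^{\pi_2}$ and $\mathbb U_{C,\mathcal E}^{\pi_2}$ a single non-trivial element of order $2$; and for the five unitals in part~(ii) a group $G_{[P]}$ of order $4$ for exactly one point $P\in[\infty]$, with $G_{[P']}$ trivial for the remaining points at infinity, so that the three non-trivial translations automatically share the center $P$. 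An order-$4$ translation group is either cyclic or elementary abelian; to obtain the stated type it remains to check that each of its three non-trivial elements is an involution (equivalently, that no translation has order $4$), which is read off from the enumeration. Since $\#G_{[P]}=4=q$, such a $P$ is a translation center by Remark~\ref{semireg}.

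The main obstacle is that the parallelisms $\pi_1,\ldots,\pi_5$ possess no algebraic description: unlike $\flat$, $\pi_{\mathsf{odd}}$ and $\pi_{\mathsf{sq}}$ they exist only as computer data, so there is no uniform group-theoretic shortcut analogous to Lemma~\ref{transp} and Theorem~\ref{translationen}, and the result is genuinely a finite verification (carried out in \cite{diss} with GAP). The entire conceptual content lies in the reduction of the first paragraph; once every translation is known to fix $[\infty]$, the remaining step is a mechanical filtering of the already computed, small automorphism groups.
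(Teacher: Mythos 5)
Your proposal is correct and matches the paper's treatment: the paper offers no argument beyond citing the GAP computation from \cite{diss}, and your reduction (every translation has its center on the pointwise-fixed block $[\infty]$ by Remark \ref{semireg}, then a finite filtering of the explicitly known automorphism groups, with $\pi_6,\pi_7$ already covered by Theorem \ref{translationen}) is exactly the computation being cited. The only cosmetic omission is that you do not explicitly record the trivial outcome for $\mathbb U_{C,\mathcal E}^{\pi_1}$, but your enumeration covers it.
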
\bigskip\bigskip

\textbf{Acknowledgment.} The author wishes to warmly thank her thesis advisor Markus J. Stroppel for his highly valuable support in each phase of this research.

	\printbibliography
\end{document}